\newtheorem{thm}{Theorem}[section]
\newtheorem{cor}[thm]{Corollary}
\newtheorem{lem}[thm]{Lemma}
\newtheorem{prop}[thm]{Proposition}
\theoremstyle{definition}
\newtheorem{de}[thm]{Definition}
\theoremstyle{remark}
\newtheorem{rem}[thm]{Remark}
\newtheorem{exam}[thm]{Example}
\numberwithin{equation}{section}
\begin{document}

\title[Period solutions for MVSDEs]
{Periodic solutions for McKean-Vlasov SDEs under periodic distribution-dependent Lyapunov conditions}

\author{Jun Ma}
\address{J. Ma: School of Mathematics and Statistics and Center for Mathematics and Interdisciplinary Sciences,
Northeast Normal University, Changchun, 130024, P. R. China}
\email{mathmajun@163.com}



\date{}

\subjclass[2010]{34C25, 60H10}

\keywords{Periodic solutions, McKean-Vlasov SDEs, periodic distribution-dependent Lyapunov conditions,
convergence}

\begin{abstract}
In this paper, we prove the existence of periodic solutions
for McKean-Vlasov SDEs under periodic distribution-dependent Lyapunov conditions,
which is obtained by periodic Markov processes with state space $\mathbb R^d\times \mathcal P(\mathbb R^d)$.
Here $\mathcal P(\mathbb R^d)$ denotes the space of probability measures on $\mathbb R^d$.
In addition, we show the convergence to the periodic solution
and the continuous dependence on parameters of periodic solutions for McKean-Vlasov SDEs.
Finally, we provide several examples to illustrate our theoretical results.
\end{abstract}

\maketitle

\section{Introduction}
Periodic motion is a fundamental concept in dynamical systems and probability theory,
characterized by the repetitive return of a system to its original position after a fixed period of time.
Investigating periodic orbits could provide valuable insights into the long-term behavior of dynamical systems and Markov processes.
Periodic orbits have been widely observed in various natural phenomena,
such as the swinging of pendulum, the movement of sun and moon, and the periodic oscillation of electromagnetic waves.

At present, periodic orbits have received widespread attention,
and been extensively studied in dynamical systems.
Let us review some studies on periodic solutions for SDEs which are closely related to our work.
Khasminskii \cite{Khasminskii} defined periodic solutions for SDEs in the framework of periodic Markov processes,
and obtained the existence under periodic Lyapunov conditions.
Chen et al. \cite{CHLY} studied the existence of classical periodic solutions for the corresponding Fokker-Planck equations
under uncommon Lyapunov conditions.
Cheban and Liu \cite{CL} investigated the existence of periodic solutions for semilinear SDEs,
who also showed the existence of other recurrence solutions, such as quasi-periodic, almost periodic, almost automorphic and Birkhoff recurrent.
Ji et al. \cite{JQSY1,JQSY2,JQSY3} obtained the existence, uniqueness, noise-vanishing concentration and limit behaviors
of periodic solutions for the corresponding Fokker-Planck equations under periodic Lyapunov conditions.
Liu and Lu \cite{LL} studied the ergodic property of inhomogeneous Markov processes,
and obtained exponential ergodicity under total variation distance for inhomogeneous SDEs
with almost periodic coefficients under Lyapunov conditions.

In this paper, we study the existence, convergence and continuous dependence on parameters of periodic solutions
for McKean-Vlasov SDEs (abbreviated as MVSDEs)
under periodic distribution-dependent Lyapunov conditions,
where the periodic Lyapunov functions are defined on $\mathbb R^+\times\mathbb R^d\times \mathcal P(\mathbb R^d)$,
i.e. the Lyapunov functions are periodic in time, and depend not only on space variable but also on distribution variable.
MVSDEs are also known as mean-field SDEs or distribution-dependent SDEs,
whose coefficients depend not only on the microcosmic site,
but also on the macrocosmic distribution of particles
\begin{equation}\label{main}
dX_t= b(t,X_t,\mathcal{L}_{X_t})dt+ \sigma(t,X_t,\mathcal{L}_{X_t})dW_t,
\end{equation}
where $\mathcal{L}_{X_t}$ denotes the law of $X_t$.
MVSDEs are a model for Vlasov equation and were first studied by McKean \cite{Mckean} in 1966.
The study of Lasry and Lions \cite{Larsy-Lions1,Larsy-Lions2,Larsy-Lions3} as well as Huang, Malham\'e and Caines \cite{HMC1,HMC2}
has significantly attracted the interest of more researchers to MVSDEs,
who independently introduced mean-field games in order to study large population deterministic and stochastic differential games.
Wang \cite{Wang_18}, Ding and Qiao \cite{DQ} as welll as Liu and Ma \cite{LM2}
obtained the existence and uniqueness of solutions for MVSDEs.
Ren et al. \cite{RTW} and Hong et al. \cite{HHL} investigated the existence and uniqueness of solutions
for McKean-Vlasov stochastic partial differential equations.
Butkovsky \cite{Butkovsky}, Wang \cite{Wang_18}, Bogachev et al. \cite{BRS} as well as Liu and Ma \cite{LM2}
studied the existence of invariant measures for MVSDEs.
Buckdahn et al. \cite{BLP,BLPR} investigated the relationship between the functional of the form $Ef(t, \bar X_t, \mathcal L_{X_t})$ and the associated second-order PDE.

Regarding the existence of periodic solutions for MVSDEs,
Zhou et al. \cite{ZXJL} established the results by comparison principle under Lipschitz conditions.
Ren et al. \cite{RSW} obtained the exponential ergodicity for time-periodic MVSDEs under monotone conditions,
thus get the existence of periodic solutions for MVSDEs.
Sun and Wong \cite{SW} studied the existence of periodic solutions for MVSDEs under Lypapunov conditions,
whose Lyapunov functions only depend on time and space variable, not depend on distribution variable.
However, in this paper we aim to determine whether a periodic solution exists for MVSDEs under periodic distribution-dependent Lyapunov conditions.

In this paper, we focus on studying the existence and convergence of periodic solutions under periodic Lyapunov conditions,
where the periodic Lyapunov functions are defined on $\mathbb R^+\times\mathbb R^d\times \mathcal P_2(\mathbb R^d)$.
Specifically, the Lyapunov functions are periodic in time, and depend not only on space variable but also on distribution variable.
Here $\mathcal P_2(\mathbb R^d)$ denotes the set of probability measures on $\mathbb R^d$ with finite second-moment.
Considering distribution-dependent Lyapunov functions is both reasonable and natural
since the coefficients of MVSDEs depend on distribution variable.
However, the existing study under Lyapunov conditions for MVSDEs focuses on space-dependent Lyapunov functions,
which are independent of distribution variable, such as \cite{BRS,RTW}.
In \cite{LM2}, Liu and Ma simultaneously truncate space and distribution variables, as proposed in \cite{RTW},
to establish the existence and uniqueness of solutions, and employ Krylov-Bogolioubov theorem to get ergodicity under distribution-dependent Lyapunov conditions.
It is worth noting that Krylov-Bogolioubov theorem is valid in \cite {LM2} since the Lyapunov functions are defined on $\mathbb R^d\times \mathcal P_2(\mathbb R^d)$,
If the Lyapunov functions are only defined on $\mathbb R^d$, Krylov-Bogolioubov theorem may be not valid
since the corresponding Fokker-Planck equation for MVSDE is nonlinear.
Therefore, we build upon the previous idea \cite{LM2} to investigate the existence and convergence of periodic solutions
for MVSDEs under periodic distribution-dependent Lyapunov conditions.

To establish the existence of periodic solutions for MVSDEs under periodic distribution-dependent Lyapunov conditions,
we adopt the Khasminskii's approach \cite{Khasminskii},
which established the existence of periodic solutions for SDEs under periodic Lyapunov conditions by periodic Markov processes.
Following Khasminskii's idea,
we first define periodic Markov process with state space $\mathbb R^d\times \mathcal P(\mathbb R^d)$
and its corresponding transition probability function,
then derive the equivalent conditions for the existence of the periodic Markov process.
Then utilizing the equivalent conditions, we get the existence of periodic probability
on $\mathbb R^d\times \mathcal P(\mathbb R^d)$ for the transition probability function generated by the coupled MVSDE.
This coupled MVSDE is introduced since the second variable `$(x,\mu)$'
in transition probability function $P(\cdot,(x,\mu),\cdot,\cdot)$ is not a `pair',
that is to say that $\mu$ is arbitrary chosen from $\mathcal P(\mathbb R^d)$, and may not equal $\delta_x$.
Consequently, to use the results about the existence periodic solution for transition probability, we need to introduce the coupled MVSDEs;
for details, see Section 4.
Finally we obtain the existence of periodic solutions for MVSDEs under periodic distribution-dependent Lyapunov conditions,
where the periodic distribution is the ``projection" of periodic probability on $\mathbb R^d\times \mathcal P(\mathbb R^d)$
for a special coupled MVSDE;
for details, see Section 4.

In addition to establishing the existence of periodic solutions for MVSDEs,
we investigate the convergence of periodic solutions
for MVSDEs under periodic distribution-dependent Lyapunov conditions in weak topology sense, specifically using L\'evy-Prohorov distance.
We first get the convergence of periodic Markov processes,
which is exactly the convergence for some subsequence.
We then obtain the convergence of periodic solutions for MVSDEs under periodic Lyapunov conditions,
which could also be viewed as a special case of the coupled MVSDEs.
Furthermore, we study the continuous dependence on parameters of periodic solutions
for MVSDEs under distribution-dependent Lyapunov conditions.

The rest of this paper is arranged as follows.
In Section 2, we collect a number of preliminary,
including the definition of Lions derivative, function spaces and so on.
Section 3 defines periodic Markov process with state space $\mathbb R^d\times\mathcal P(\mathbb R^d)$
and its transition probability function,
and obtains the existence, convergence as well as continuous dependence on parameters of the periodic Markov processes.
In Section 4, we show the existence, convergence and continuous dependence on parameters of periodic solutions for MVSDEs
under periodic distribution-dependent Lyapunov conditions.
In Section 5, we provide some examples to illustrate our theoretical results.

\section{Preliminary}

Throughout the paper, let $(\Omega, \mathcal F, \{\mathcal F_t\}_{t\ge 0},\mathbb P)$ be a filtered complete probability space,
in which the filtration $\{\mathcal F_t\}$ is assumed to satisfy the usual condition,
i.e. it is right continuous and $\mathcal F_0$ contains all $P$-null sets.
Let $\mathcal P(\mathbb R^d)$ be the space of probability measures on $\mathbb R^d$.
We use the Wasserstein distance $W_2$ on $\mathcal P_2(\mathbb R^d)$ in what follows, i.e.
\begin{equation}
W_2(\mu, \nu):=\inf_{\pi \in \mathcal C(\mu, \nu)} \bigg[\int_{\mathbb R^d\times\mathbb R^d} |x-y|^2 \pi(dx, dy)\bigg]^{1/2}\nonumber
\end{equation}
for $\mu, \nu\in \mathcal P_2(\mathbb R^d)$,
where $\mathcal P_2(\mathbb R^d):=\{\mu\in\mathcal P(\mathbb R^d): \int_{\mathbb R^d}|x|^2 \mu(dx)<\infty \}$,
and $\mathcal C(\mu,\nu)$ denotes the set of all couplings between $\mu$ and $\nu$.

We also define an another distance $\omega:\mathcal P(\mathbb R^d)\times\mathcal P(\mathbb R^d)\to\mathbb R^+$,
which is called L\'evy-Prohorov distance, by
$$
\omega(\mu,\nu)=\inf\{\delta:\mu(A)<\nu(A^{\delta})+\delta,\nu(A)<\mu(A^{\delta})+\delta \quad \hbox{\text{for} }\hbox{\text{all} } \hbox{\text{closed} } \hbox{\text{set} } A\in\mathcal F\},
$$
where $A^{\delta}=\{x: \exists y\in A ~\hbox{\text{such} } \hbox{\text{that} }~|x-y|<\delta\}$.
Convergence under the L\'evy-Prohorov distance is equivalent to the weak convergence of measures; see \cite[Theorem 1.11]{Prohorov} for instance.
We next present a proposition that characterizes the relationship between L\'evy-Prohorov distance and Wasserstein distance
on $\mathcal P(\mathbb R^d)$ as follows; see \cite[Lemma 5.3, Theorem 5.5]{Chen} or \cite[Theorem 6.9]{Villani} for instance.
\begin{prop}\label{Ww}
For any $\mu,\nu\in\mathcal P(\mathbb R^d)$, we have
\begin{enumerate}
\item $W_p(\mu,\nu)\ge \omega(\mu,\nu)^{1+\frac 1p}.$
\item A subset $\mathcal M\subset\mathcal P(\mathbb R^d)$ is compact in $W_p$ if and only if
$\mathcal M$ is weakly compact, and
\begin{align*}
\lim_{N\to\infty}\sup_{\mu\in\mathcal M}\int_{\{x:|x-x_0|>N\}}|x-x_0|^p\mu(dx)=0,
\end{align*}
for some $x_0\in\mathbb R^d$.
\end{enumerate}
\end{prop}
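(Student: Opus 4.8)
The plan is to treat the two assertions separately, as they rely on different tools: part (1) is a direct coupling estimate, while part (2) rests on the sequential characterization of $W_p$-convergence.

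For part (1), I would begin from an optimal coupling $\pi\in\mathcal C(\mu,\nu)$ realizing $W_p(\mu,\nu)$, whose existence is standard by lower semicontinuity of the cost together with tightness. Fix a closed set $A$ and a level $\delta>0$, and split $\mu(A)=\pi(A\times\mathbb R^d)$ according to whether the two coupled coordinates lie within distance $\delta$. Since $x\in A$ and $|x-y|<\delta$ force $y\in A^{\delta}$, one obtains
\[
\mu(A)\le \pi(\{y\in A^{\delta}\})+\pi(\{|x-y|\ge\delta\})\le \nu(A^{\delta})+\pi(\{|x-y|\ge\delta\}).
\]
Markov's inequality applied to the $p$-th power gives $\pi(\{|x-y|\ge\delta\})\le \delta^{-p}\int|x-y|^p\,d\pi=\delta^{-p}W_p(\mu,\nu)^p$, and symmetrically $\nu(A)\le\mu(A^{\delta})+\delta^{-p}W_p(\mu,\nu)^p$. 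The key step is to pick $\delta$ so that the error $\delta^{-p}W_p^p$ is controlled by $\delta$ itself: with $\delta=W_p(\mu,\nu)^{p/(p+1)}$ one checks $\delta^{-p}W_p^p=\delta$, so both Prohorov inequalities hold at this level. The only delicate bookkeeping is the strict inequality in the definition of $\omega$: I would pass to a marginally larger $\delta'>\delta$, use $A^{\delta}\subset A^{\delta'}$ to get the strict bound $\mu(A)\le\nu(A^{\delta})+\delta<\nu(A^{\delta'})+\delta'$, and then let $\delta'\downarrow\delta$ to conclude $\omega(\mu,\nu)\le W_p(\mu,\nu)^{p/(p+1)}$. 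Raising to the power $(p+1)/p=1+\frac1p$ yields the claimed bound.

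For part (2), I would reduce everything to the fact that $\mu_n\to\mu$ in $W_p$ if and only if $\mu_n\to\mu$ weakly and $\int|x-x_0|^p\,d\mu_n\to\int|x-x_0|^p\,d\mu$, together with the standard fact that weak convergence plus uniform integrability of $|x-x_0|^p$ forces convergence of these moments. For the forward implication, part (1) shows $W_p$-convergence implies weak convergence, so $W_p$-compactness of $\mathcal M$ yields weak compactness. If the uniform tail condition failed, there would exist $\varepsilon>0$, measures $\mu_n\in\mathcal M$, and $N_n\to\infty$ with $\int_{\{|x-x_0|>N_n\}}|x-x_0|^p\,d\mu_n\ge\varepsilon$; extracting a $W_p$-convergent subsequence $\mu_{n_k}\to\mu$ forces $p$-th moment convergence, which is incompatible with non-vanishing tails, a contradiction. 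For the reverse implication, take any sequence in $\mathcal M$; weak compactness (via Prohorov's theorem) extracts a weakly convergent subsequence $\mu_{n_k}\to\mu\in\mathcal M$, the uniform integrability upgrades this to convergence of the $p$-th moments, and the characterization then promotes it to $W_p$-convergence, establishing sequential compactness of $\mathcal M$ in $W_p$.

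The main obstacle is part (2), and specifically the sequential characterization of $W_p$-convergence as weak convergence plus convergence of the $p$-th moment about a fixed point, since this is the engine that lets the uniform tail condition bridge weak and Wasserstein topologies. By contrast, part (1) is a clean estimate whose only subtlety is the strict/non-strict matching in the Prohorov distance. In both cases I would invoke the quoted results of \cite{Chen} and \cite{Villani} for the standard convergence characterization rather than reprove it from scratch.
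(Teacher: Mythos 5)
The paper does not prove this proposition at all---it is quoted as a known result, with the proof deferred to \cite[Lemma 5.3, Theorem 5.5]{Chen} and \cite[Theorem 6.9]{Villani}---so there is no internal argument to compare against; what you have written is essentially the standard proof from those references, and it is correct. Your part (1) (optimal coupling, Markov's inequality, the choice $\delta=W_p(\mu,\nu)^{p/(p+1)}$ so that $\delta^{-p}W_p^p=\delta$, and the $\delta'\downarrow\delta$ device to respect the strict inequalities in the definition of $\omega$) is exactly the classical Prohorov--Wasserstein comparison, and your part (2) correctly reduces compactness to the sequential characterization of $W_p$-convergence, which is precisely the content of the cited Villani theorem. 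Two small points worth tightening. First, in the forward direction of (2), your contradiction argument needs \emph{uniform integrability} of $|x-x_0|^p$ along the $W_p$-convergent subsequence, not merely convergence of the $p$-th moments: non-vanishing tails at levels $N_{n_k}\to\infty$ contradict uniform integrability directly, whereas moment convergence alone must first be upgraded to uniform integrability (weak convergence plus convergence of $\int f\,d\mu_{n_k}$ for the nonnegative continuous $f=|x-x_0|^p$ yields this via the truncation bound $\int_{\{f>2N\}}f\,d\mu_{n_k}\le 2\bigl(\int f\,d\mu_{n_k}-\int (f\wedge N)\,d\mu_{n_k}\bigr)$); you invoke only the ``UI implies moment convergence'' half, so state the converse half explicitly. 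Second, in part (1) you should dispatch the degenerate cases first: if $W_p(\mu,\nu)=0$ then $\mu=\nu$ and the claim is trivial (your choice $\delta=0$ would break Markov's inequality), and since the proposition is stated on all of $\mathcal P(\mathbb R^d)$ rather than $\mathcal P_p(\mathbb R^d)$, the value $W_p(\mu,\nu)=\infty$ can occur, in which case the inequality holds vacuously. Neither point affects the substance of the argument.
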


Let $C_{b}(\mathbb R^+\times\mathbb R^d\times\mathcal P(\mathbb R^d))$ denote the space of continuous and bounded functions on $\mathbb R^+\times\mathbb R^d\times\mathcal P(\mathbb R^d)$,
$\mathcal B(\mathbb R^+\times\mathbb R^d\times\mathcal P(\mathbb R^d))$ denote the Borel $\sigma$-algebra on $\mathbb R^+\times\mathbb R^d\times\mathcal P(\mathbb R^d)$.

\begin{de}[Lions derivative]
A function $f:\mathcal P_2(\mathbb R^d)\to\mathbb R$ is called differentiable at $\mu\in\mathcal P_2(\mathbb R^d)$, denoted by $\partial_{\mu} f$,
if there exists a random variable $X\in L^2(\Omega)$ with $\mu=\mathcal L_X$
such that $F(X):=f(\mathcal L_X)$ and $F$ is Fr\'echet differentiable at $X$.
$f$ is called differentiable on $\mathcal P_2(\mathbb R^d)$ if $f$ is differentiable at any $\mu\in\mathcal P_2(\mathbb R^d)$.
\end{de}
\begin{de}
The space $C^{(1,1)}(\mathcal P_2(\mathbb R^d))$ contains the functions $f:\mathcal P_2(\mathbb R^d)\to\mathbb R$
satisfying the following conditions:
(i) $f$ is differentiable and its derivative $\partial_{\mu} f(\mu)(y)$ has a jointly continuous version in $(\mu,y)$, still denoted $\partial_{\mu} f(\mu)(y)$;
(ii) $\partial_{\mu} f(\mu)(\cdot)$ is continuously differentiable for any $\mu$,
and its derivative $\partial_y \partial_{\mu} f(\mu)(y)$ is jointly continuous at any $(\mu,y)$.
\end{de}

\section{Periodic Markov process}
In this section, we consider the existence, convergence and continuous dependence on parameters of periodic Markov process
with state space $\mathbb R^d\times\mathcal P(\mathbb R^d)$.
Before giving the main results in this section,
we first define periodic Markov process with state space $\mathbb R^d\times\mathcal P(\mathbb R^d)$
and the corresponding transition probability function.
We then obtain the equivalent conditions for the existence of the periodic Markov process.
Futhermore, we illustrate the convergence and continuous dependence on parameters of periodic Markov process.
So we divide this section into three parts:
Existence, Convergence and Continuous dependence.

\subsection{Existence}
In this subsection, we investigate the equivalent conditions for the existence of periodic Markov processes
defined on probability space $(\Omega,\mathcal F,P)$ with state space $\mathbb R^d\times\mathcal P(\mathbb R^d)$.
Before giving the main results in this subsection,
we first define the so-called stochastic process and Markov process with state space $\mathbb R^d\times\mathcal P(\mathbb R^d)$
and its transition probability.
\begin{de}
A family $\{M_t:t\in [t_0,R]\}$ of $\mathbb R^d\times\mathcal P(\mathbb R^d)$-valued random variables
is called a stochastic process with state space $\mathbb R^d\times\mathcal P(\mathbb R^d)$.
\end{de}
\begin{de}
A stochastic process $\{M_t:t\in [t_0,R]\}$ defined on probability space $(\Omega,\mathcal F,\mathbb P)$
with index $[t_0,R]\subset [0,\infty)$ and state space $\mathbb R^d\times\mathcal P(\mathbb R^d)$
is called a Markov process with state space $\mathbb R^d\times\mathcal P(\mathbb R^d)$
if it satisfies the following Markov property:
$$\mathbb P(M_t\in A|\mathcal F([s,u]))=\mathbb P(M_t\in A|M_{u})~\quad a.s.$$
Here $\mathcal F([s,u]):=\sigma(M_t:t\in [s,u])$ denotes the smallest $\sigma$-algebra such that $M_t$ is measurable for any $t\in [s,u]$.
\end{de}
\begin{de}
A function $P(s,(x,\mu),t,A)$ is called a transition probability function if the followings hold:
\begin{itemize}
\item[(i)] $P(s,(x,\mu),t,\cdot)$ is a probability on $\mathcal B(\mathbb R^d\times\mathcal P(\mathbb R^d))$ for any $s\le t,(x,\mu)\in\mathbb R^d\times\mathcal P(\mathbb R^d)$;

\item[(ii)] $P(s,\cdot,t,A)$ is $\mathcal B(\mathbb R^d\times\mathcal P(\mathbb R^d))$-measurable for any $s\le t,A\in\mathcal B(\mathbb R^d\times\mathcal P(\mathbb R^d))$;

\item[(iii)] For any $t_0\le s\le u\le t\le R,(x,\mu)\in\mathbb R^d\times\mathcal P(\mathbb R^d)$ and $A\in\mathcal B(\mathbb R^d\times\mathcal P(\mathbb R^d))$, we have the Chapman-Kolmogorov equation
    $$P(s,(x,\mu),t,A)=\int_{\mathbb R^d\times\mathcal P(\mathbb R^d)}P(s,(x,\mu),u,(dy,d\nu))\cdot P(u,(y,\nu),t,A);$$

\item[(iv)] For any $s\in [t_0,R]$ and $A\in\mathcal B(\mathbb R^d\times\mathcal P(\mathbb R^d))$, we have
            $$P(s,(x,\mu),s,A)=1_{A}(x,\mu)=\left\{
                                             \begin{aligned}
                                              & 1, && {(x,\mu)\in A},\\
                                              & 0, && {\rm else}.
                                             \end{aligned}
                                             \right.$$
\end{itemize}
\end{de}
\begin{de}
$P(s,(x,\mu),t,A)$ is called a transition probability function of the Markov process $M_t$ with state space $\mathbb R^d\times\mathcal P(\mathbb R^d)$ if
$$P(s,M_s,t,A)=\mathbb P(M_t\in A|M_s)~~a.s.$$
\end{de}
Similar with the general Markov process with state space $\mathbb R^d$,
all finite-dimensional distributions of the process can be obtained from the transition probability and its initial distribution.
The proof is similar to the classical case, so we omit.
\begin{prop}\label{prop1}
If $M_t$ is a Markov process with state space $\mathbb R^d\times\mathcal P(\mathbb R^d)$, transition probability $P(s,(x,\mu),t,A)$ and initial distribution $\mu$,
we have
\begin{align*}
\mathbb P(M_{t_1}\in A_1,...,M_{t_n}\in A_n)=&\int_{\mathbb R^d}\int_{A_1}\cdots\int_{A_{n-1}}P(t_{n-1},(x_{n-1},\mu_{n-1}),t_n,A_n)\\
                                              &\quad\cdots P(t_0,(x_0,\mu_0),t,(dx_1,d\mu_1))\mu(dx_0,d\mu_0)
\end{align*}
for any $t_0\le t_1<...<t_n\le T, A_1,...,A_n\in\mathcal B(\mathbb R^d\times\mathcal P(\mathbb R^d))$.
Particularly,
$$\mathbb P(M_t\in A)=\int_{\mathbb R^d}P(t_0,(x_0,\mu_0),t,A)\mu(dx_0,d\mu_0).$$
\end{prop}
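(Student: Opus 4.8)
The plan is to establish the factorization by induction on $n$, using only the Markov property together with the defining relation $P(s,M_s,t,A)=\mathbb P(M_t\in A\mid M_s)$ of the transition probability function. Since the state space $\mathbb R^d\times\mathcal P(\mathbb R^d)$ is Polish — recall that weak convergence of measures is metrized by the L\'evy--Prohorov distance (see \cite{Prohorov}), so $\mathcal P(\mathbb R^d)$ is a separable complete metric space — regular conditional distributions exist and the argument runs word-for-word as in the classical case of $\mathbb R^d$-valued Markov processes. To keep the induction self-contained I would in fact prove the slightly stronger functional identity: for bounded measurable $f_1,\dots,f_n$ on $\mathbb R^d\times\mathcal P(\mathbb R^d)$,
\[
\mathbb E\Big[\textstyle\prod_{i=1}^n f_i(M_{t_i})\Big]=\int\!\!\cdots\!\!\int \Big(\textstyle\prod_{i=1}^n f_i(x_i,\mu_i)\Big)\,P(t_{n-1},(x_{n-1},\mu_{n-1}),t_n,(dx_n,d\mu_n))\cdots P(t_0,(x_0,\mu_0),t_1,(dx_1,d\mu_1))\,\mu(dx_0,d\mu_0),
\]
and then recover the stated formula by taking $f_i=\mathbf 1_{A_i}$ and performing the innermost integration $\int_{A_n}P(t_{n-1},\cdot\,,t_n,(dx_n,d\mu_n))=P(t_{n-1},\cdot\,,t_n,A_n)$.

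For the base case $n=1$, since $M_{t_0}$ has law $\mu$, the tower property and the definition of the transition function give
\[
\mathbb P(M_t\in A)=\mathbb E\big[\mathbb P(M_t\in A\mid M_{t_0})\big]=\mathbb E\big[P(t_0,M_{t_0},t,A)\big]=\int_{\mathbb R^d\times\mathcal P(\mathbb R^d)}P(t_0,(x_0,\mu_0),t,A)\,\mu(dx_0,d\mu_0),
\]
which is the ``Particularly'' statement; the integrand is measurable by property (ii) of the transition function. For the inductive step I would condition on $\mathcal F([t_0,t_{n-1}])$ and pull the first $n-1$ factors out:
\[
\mathbb E\Big[\textstyle\prod_{i=1}^n f_i(M_{t_i})\Big]=\mathbb E\Big[\textstyle\prod_{i=1}^{n-1} f_i(M_{t_i})\;\mathbb E\big[f_n(M_{t_n})\mid\mathcal F([t_0,t_{n-1}])\big]\Big].
\]
By the Markov property the inner conditional expectation equals $\mathbb E[f_n(M_{t_n})\mid M_{t_{n-1}}]=\int f_n(y,\nu)\,P(t_{n-1},M_{t_{n-1}},t_n,(dy,d\nu))=:g(M_{t_{n-1}})$, where $g$ is bounded and measurable. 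Absorbing $g$ into the last surviving factor and applying the induction hypothesis to the $n-1$ functions $f_1,\dots,f_{n-2},f_{n-1}g$ yields the claimed identity at level $n$, and hence the proposition.

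The substantive content is entirely measure-theoretic bookkeeping rather than any new idea, so the main obstacle is purely a matter of care. One must check (a) that $(x,\mu)\mapsto\int f(y,\nu)\,P(s,(x,\mu),t,(dy,d\nu))$ is bounded and Borel measurable, which follows by starting from indicators — where it is exactly property (ii) — and extending to bounded measurable $f$ by a functional monotone class argument; and (b) that the iterated integrals are well defined and may be nested in the stated order, i.e.\ a Fubini-type statement for the composition of transition kernels. The only point that is not literally classical is verifying that the measurable structure of $\mathbb R^d\times\mathcal P(\mathbb R^d)$ behaves like that of a Euclidean state space, which is guaranteed by its being Polish. This is precisely why the details can be, and are, omitted.
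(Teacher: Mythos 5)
Your proof is correct: it is exactly the classical induction (tower property, Markov property, monotone class extension from indicators to bounded measurable functions) that the paper itself invokes when it writes ``The proof is similar to the classical case, so we omit,'' so you have simply supplied the omitted standard argument. One minor remark: since the transition function $P$ is given as part of the data, you never actually need the existence of regular conditional distributions (and hence the Polish structure of $\mathbb R^d\times\mathcal P(\mathbb R^d)$) --- the abstract conditional-expectation argument suffices.
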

Conversely, for given transition probability and initial distribution, there exists
a Markov process with state space $\mathbb R^d\times\mathcal P(\mathbb R^d)$ which admits the given transition probability and initial distribution;
for details, see \cite[Theotem 1.5]{RY} for instance.
\begin{prop}
Given a transition probability function $P$ and any probability $\nu$ on $\mathbb R^d\times\mathcal P(\mathbb R^d)$,
there exists a Markov process with state space $\mathbb R^d\times\mathcal P(\mathbb R^d)$, transition probability function $P$
and initial probability $\nu$.
\end{prop}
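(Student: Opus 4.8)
The plan is to realize the required process as the coordinate process on a path space, obtained from the Kolmogorov (Daniell) extension theorem applied to the finite-dimensional distributions dictated by $P$ and $\nu$. Write $E:=\mathbb R^d\times\mathcal P(\mathbb R^d)$ for the state space. The decisive preliminary observation is that $E$ is a Polish space: $\mathbb R^d$ is Polish, and $\mathcal P(\mathbb R^d)$ equipped with the L\'evy-Prohorov distance $\omega$ is Polish, since its topology is the topology of weak convergence of measures over a Polish base; hence the product $E$ is Polish and, with the Borel $\sigma$-algebra $\mathcal B(\mathbb R^d\times\mathcal P(\mathbb R^d))$, a standard Borel space. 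This is precisely the structural hypothesis under which the classical construction for $\mathbb R^d$-valued Markov processes transfers verbatim, so after recording Polishness the argument reduces to the classical case of \cite[Theorem 1.5]{RY}.

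First I would define the candidate family of finite-dimensional distributions. For $t_0\le t_1<\cdots<t_n\le R$, writing $z_i=(x_i,\mu_i)\in E$, and for $A_1,\dots,A_n\in\mathcal B(E)$, set
\begin{align*}
\mu_{t_1,\dots,t_n}(A_1\times\cdots\times A_n)
&:=\int_{E}\nu(dz_0)\int_{A_1}P(t_0,z_0,t_1,dz_1)\cdots\\
&\qquad\cdots\int_{A_{n-1}}P(t_{n-2},z_{n-2},t_{n-1},dz_{n-1})\,P(t_{n-1},z_{n-1},t_n,A_n).
\end{align*}
Properties (i)--(ii) of the transition probability function guarantee that each integrand is measurable in the appropriate variable, so this is well defined and extends by a monotone class argument to a Borel probability measure on the finite product $E^{n}$.

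Next I would verify the two Kolmogorov consistency conditions. Permutation symmetry is immediate once the times are listed increasingly. The essential condition is consistency under omission of an intermediate time point $t_k$: integrating the $k$th coordinate over all of $E$ must return $\mu_{t_1,\dots,t_{k-1},t_{k+1},\dots,t_n}$. This is exactly where the Chapman-Kolmogorov equation, property (iii), does the work, fusing the two consecutive kernels $P(t_{k-1},\cdot,t_k,dz_k)$ and $P(t_k,z_k,t_{k+1},\cdot)$ into the single kernel $P(t_{k-1},\cdot,t_{k+1},\cdot)$, with property (iv) handling the endpoint cases $k=1$ and $k=n$. Since $E$ is Polish, the Kolmogorov extension theorem then yields a probability space $(\Omega,\mathcal F,\mathbb P)$, concretely $\Omega=E^{[t_0,R]}$ with the cylinder $\sigma$-algebra, together with the coordinate process $M_t(\omega):=\omega(t)$ whose finite-dimensional distributions are exactly the $\mu_{t_1,\dots,t_n}$. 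The one-dimensional marginal gives $\mathbb P(M_{t_0}\in\cdot)=\nu$, while the Markov property $\mathbb P(M_t\in A\mid\mathcal F([s,u]))=P(u,M_u,t,A)$ is obtained by testing against cylinder functions, invoking the factorized form of the finite-dimensional distributions and Chapman-Kolmogorov, and upgrading from rectangles via a $\pi$--$\lambda$ argument; standard Borel structure of $E$ is used here to ensure the relevant regular conditional probabilities exist and coincide with $P(u,M_u,t,\cdot)$.

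The main obstacle is structural rather than computational: abstract consistency of a projective family does not by itself produce a limit measure on a general measurable state space, so the crux is confirming that $\mathcal P(\mathbb R^d)$, and hence $E$, is Polish, which places the problem squarely inside the hypotheses of the Kolmogorov extension theorem. Once this is secured, the remainder is a transcription of the classical $\mathbb R^d$-valued construction, with property (iii) supplying consistency and properties (i)--(ii) supplying the measurability needed to define the kernels; accordingly I would, as the authors indicate, largely defer the routine verifications to \cite[Theorem 1.5]{RY} after establishing the Polishness of the state space.
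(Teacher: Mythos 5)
Your proposal is correct and matches the paper's approach: the paper gives no proof at all, simply deferring to \cite[Theorem 1.5]{RY}, and your argument is exactly the standard construction behind that citation (finite-dimensional distributions from $\nu$ and the kernels, consistency via Chapman--Kolmogorov, Kolmogorov extension on the Polish state space $\mathbb R^d\times\mathcal P(\mathbb R^d)$). Your explicit observation that $\mathcal P(\mathbb R^d)$ with the L\'evy--Prohorov metric is Polish is the one nontrivial point the paper leaves implicit, and you handle it correctly.
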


We then utilize the transition probability function $P$ to construct a family of operators $\{P_t\}_{t\ge t_0}$ as follows:
\begin{equation}\label{DP}
P_tf(s,x,\mu)=\int P(s,(x,\mu),s+t,(dy,d\nu))f(s+t,y,\nu)
\end{equation}
for any $f\in \mathcal B_b(\mathbb R^+\times\mathbb R^d\times\mathcal P(\mathbb R^d))$,
which is indeed a semigroup by Proposition \ref{prop1}, i.e. $P_{t+s}=P_t\circ P_s$.
Indeed, we have
\begin{align*}
P_{t+s}f(t_0,x,\mu)
=&
\int P(t_0,(x,\mu),t_0+t+s,(dy,d\nu))f(t_0+t+s,y,\nu)\\
=&
\int \int P(t_0,(x,\mu),t_0+t,(dz,dm))P(t_0+t,(z,m),t_0+t+s,(dy,d\nu))\\
&\quad\cdot f(t_0+t+s,y,\nu)\\
=&
\int P(t_0,(x,\mu),t_0+t,(dz,dm))P_sf(t_0+t,z,m)\\
=&
P_t\circ P_sf(t_0,x,\mu).
\end{align*}
$P(s,(x,\mu),t,A)$ is called a Feller transition probability function
if $P_tf\in C_b(\mathbb R^+\times\mathbb R^d\times\mathcal P(\mathbb R^d))$ for any $f\in C_b(\mathbb R^+\times\mathbb R^d\times\mathcal P(\mathbb R^d))$.

Here we explain why we need to introduce Markov process with state space $\mathbb R^d\times\mathcal P(\mathbb R^d)$ once more.
Note that if we only consider space $\mathbb R^d$, the Fokker-Planck equation for MVSDE is nonlinear,
and the operator $P_tf(s,x):=\int P(s,x,s+t,dy)f(s+t,y)$ is not a semigroup,
therefore the Krylov-Bogolioubov Theorem is invalid.
However if consider space $\mathbb R^d\times\mathcal P(\mathbb R^d)$, $P_t$ defined in \eqref{DP} is a semigroup and linear,
therefore the Krylov-Bogolioubov Theorem is valid.

We next give the definitions of periodic transition probability function and periodic Markov process.
\begin{de}
The transition probability function $P$ is said to be $T$-periodic
if it satisfies $P(s+T,(x,\mu),t+T,A)=P(s,(x,\mu),t,A)$ for any $s,t,(x,\mu),A$.
\end{de}
\begin{de}[Periodic Markov process]
The Markov process $\{M_t\}$ with $t\in[t_0,R]$ and state space $\mathbb R^d\times\mathcal P(\mathbb R^d)$
is called periodic Markov process with period $T$
if its transition probability function is $T$-periodic
and its initial distribution $P_0(t,A)=P(M_t\in A)$ satisfies
$$P_0(t,A)
=
\int P_0(t,(dx,d\mu))P(t,(x,\mu),t+T,A)=P_0(t+T,A)$$
for any $A\in\mathcal B(\mathbb R^d\times\mathcal P(\mathbb R^d))$.
Specially, if $P_0$ is independent of $t$,
then $\{M_t\}$ is called a stationary Markov process.
\end{de}

We then give one of the main results in this subsection, which describes the equivalent condition for the existence of periodic Markov process with state space $\mathbb R^d\times\mathcal P(\mathbb R^d)$.
\begin{thm}\label{main1}
There exists a $T$-periodic Markov process with state space $\mathbb R^d\times\mathcal P(\mathbb R^d)$ and a given $T$-periodic Feller transition probability function $P(s,(x,\mu),t,A)$
if and only if
\begin{equation}\label{Eq3.1}
\lim_{R\to\infty}\liminf_{n\to\infty}\frac 1n\sum_{k=1}^n P(s_0,(x_0,\mu_0),s_0+kT,U_R^c)=0
\end{equation}
for some $s_0\ge 0,(x_0,\mu_0)\in\mathbb R^d\times\mathcal P(\mathbb R^d)$, where $U_R^c:=\{(x,\mu):|x|\vee|\mu|_2> R\},|\mu|_2^2:=\int |x|^2\mu(dx)$.
\end{thm}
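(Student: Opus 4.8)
The plan is to mirror Khasminskii's classical argument for periodic Markov processes, now carried out on the enlarged state space $\mathbb R^d\times\mathcal P(\mathbb R^d)$, and to exploit the semigroup structure of $\{P_t\}$ established in \eqref{DP}. For the \emph{necessity} direction, suppose a $T$-periodic Markov process $\{M_t\}$ exists with the given transition function and initial distribution $P_0$. By periodicity $P_0(s_0,\cdot)$ is a fixed point of the one-period map $A\mapsto\int P_0(s_0,(dx,d\mu))P(s_0,(x,\mu),s_0+T,A)$, and iterating gives $P_0(s_0,A)=\int P_0(s_0,(dx,d\mu))\,P(s_0,(x,\mu),s_0+kT,A)$ for every $k$. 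Applying this with $A=U_R^c$ and using that $P_0(s_0,\cdot)$ is a genuine probability measure, so $P_0(s_0,U_R^c)\to 0$ as $R\to\infty$, I would bound the Ces\`aro averages in \eqref{Eq3.1} by $P_0(s_0,U_R^c)$ after integrating the displayed identity against $P_0(s_0,\cdot)$; a Fubini/Chapman--Kolmogorov step together with tightness of the single measure $P_0(s_0,\cdot)$ then forces the double limit to vanish for $P_0$-almost every choice of $(x_0,\mu_0)$, and in particular for some $(x_0,\mu_0)$.

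The substantial direction is \emph{sufficiency}. Assume \eqref{Eq3.1}. The strategy is to produce a $T$-periodic initial distribution by a Krylov--Bogolioubov averaging over the discrete skeleton at times $s_0+kT$. First I would define the Ces\`aro means
\begin{equation}\label{cesaro}
\nu_n(\cdot):=\frac 1n\sum_{k=1}^n P(s_0,(x_0,\mu_0),s_0+kT,\cdot)
\end{equation}
on $\mathbb R^d\times\mathcal P(\mathbb R^d)$. The condition \eqref{Eq3.1} is precisely the tightness estimate needed: along the subsequence realizing the $\liminf$, $\sup_n\nu_n(U_R^c)$ can be made small by taking $R$ large, so $\{\nu_n\}$ is tight. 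Here Proposition \ref{Ww} is the key tool — the sets $U_R$ are the sublevel sets for the radial function $(x,\mu)\mapsto |x|\vee|\mu|_2$, and tightness on $\mathbb R^d\times\mathcal P(\mathbb R^d)$ reduces, via the uniform-integrability characterization of $W_p$-compactness in Proposition \ref{Ww}(2), to relative compactness in the appropriate metric. By Prohorov's theorem a subsequence $\nu_{n_j}$ converges weakly to some probability $P_0(s_0,\cdot)$ on $\mathbb R^d\times\mathcal P(\mathbb R^d)$.

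It remains to verify that this limit is invariant under the one-period map. Here the Feller property enters decisively: for $f\in C_b$, applying the period-$T$ operator $P_T$ and using $P_Tf\in C_b$ (the Feller assumption) together with weak convergence gives $\int P_Tf\,d\nu_{n_j}\to\int P_Tf\,dP_0(s_0,\cdot)$, while a telescoping computation shows $\int P_Tf\,d\nu_n-\int f\,d\nu_n=\frac1n\big[P(s_0,(x_0,\mu_0),s_0+(n+1)T,f)-P(s_0,(x_0,\mu_0),s_0+T,f)\big]\to 0$ for bounded $f$. Passing to the limit yields $\int P_Tf\,dP_0(s_0,\cdot)=\int f\,dP_0(s_0,\cdot)$, i.e. $P_0(s_0,\cdot)$ is a stationary distribution for the skeleton chain. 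Finally I would propagate this initial law along the continuous time parameter by setting $P_0(t,\cdot):=\int P_0(s_0,(dx,d\mu))P(s_0,(x,\mu),t,\cdot)$ for $t\ge s_0$ and checking, using the $T$-periodicity of $P$ and the Chapman--Kolmogorov equation, that $P_0(t+T,\cdot)=P_0(t,\cdot)$; the resulting Markov process built from $P$ and $P_0$ (via the existence proposition preceding \eqref{DP}) is then $T$-periodic by construction.

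I expect the main obstacle to be the tightness step and the correct reading of \eqref{Eq3.1} as relative compactness \emph{in the topology that makes $\{P_t\}$ Feller}. Because the state space carries the Wasserstein geometry on its second factor, I must ensure that control of $|\mu|_2$ through $U_R^c$ actually yields weak relative compactness of the laws on $\mathcal P(\mathbb R^d)$; this is exactly where Proposition \ref{Ww}(2) is needed, and where care is required in separating weak compactness from the uniform second-moment integrability. The passage from skeleton-stationarity to full $T$-periodicity in continuous time is routine given the periodicity of $P$, but verifying measurability and the Feller-continuity of $P_Tf$ on the measure component will require the joint-continuity hypotheses built into the definition of the transition function.
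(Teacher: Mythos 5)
Your proposal follows essentially the same route as the paper's own proof: necessity via the invariance of the periodic law under the $kT$-step kernels combined with a Fatou/Fubini argument, and sufficiency via the Ces\`aro averages $\frac 1n\sum_{k=1}^n P(s_0,(x_0,\mu_0),s_0+kT,\cdot)$, a tightness/Prohorov extraction, and the Feller-plus-telescoping verification that the weak limit is invariant under the one-period kernel, after which the periodic process is assembled from $P$ and the invariant initial law. Even the step you flag as delicate---reading \eqref{Eq3.1} as tightness of (a subsequence of) the Ces\`aro averages---is treated with the same brevity in the paper, so your argument neither diverges from nor falls short of the published proof.
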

\begin{proof}
Necessity.
Suppose that equality \eqref{Eq3.1} does not hold,
that is to say that
there exists a function $g$ with $g(s,(x,\mu),T)>0$ for any $s,(x,\mu)$ such that
$$\lim_{R\to\infty}\liminf_{n\to\infty}\frac 1n\sum_{k=1}^n P(s,(x,\mu),s+kT,U_R^c)=g(s,(x,\mu),T)>0.$$
Assume that the distribution of the $T$-periodic Markov process is $P_0$,
which is $T$-periodic, i.e. $P_0(t+T,\cdot)=P_0(t,\cdot)$.
Note that we have
\begin{align*}
&\frac 1n\sum_{k=1}^n \int P(s,(x,\mu),s+kT,U_R^c)P_0(s,(dx,d\mu))\\
=&
\frac 1n\sum_{k=1}^n P_0(s+kT,U_R^c)\\
=&
P_0(s,U_R^c),
\end{align*}
and
\begin{align*}
&\lim_{R\to\infty}\liminf_{n\to\infty}\frac 1n\sum_{k=1}^n \int P(s,(x,\mu),s+kT,U_R^c)P_0(s,(dx,d\mu))\\
\ge &
\int g(s,(x,\mu),T)P_0(s,(dx,d\mu))
>0,
\end{align*}
where the first inequality holds by Fatou's lemma.
Combining the above two formulas, we obtain
$$0
=
\lim_{R\to\infty}P_0(s,U_R^c)
\ge
\int g(s,(x,\mu),T)P_0(s,(dx,d\mu)
>0,$$
which is a contradiction.
Therefore, we get
$$\lim_{R\to\infty}\liminf_{n\to\infty}\frac 1n\sum_{k=1}^n P(s_0,(x_0,\mu_0),s_0+kT,U_R^c)=0$$
for some $s_0,(x_0,\mu_0)$.

Sufficiency.
By limit \eqref{Eq3.1}, we know that for some $s_0,(x_0,\mu_0)$, there exists a subsequence $\{n_k\}\subset \mathbb N$ with $n_k\to\infty$ as $k\to\infty$
such that
$$\frac 1{n_k}\sum_{\ell=1}^{n_k} P(s_0,(x_0,\mu_0),s_0+\ell T,U_R^c)\to 0$$
uniformly in $k$ as $R\to\infty$.
Define $P_{n_k}(A):=\frac 1{n_k}\sum_{\ell=1}^{n_k} P(s_0,(x_0,\mu_0),s_0+\ell T,A)$.
Then $\{P_{n_k}\}$ is tight,
that is to say that by Prohorov theorem there exists a subsequence, still denoted $\{P_{n_k}\}$,
such that $P_{n_k}$ weakly converges to some probability distribution $P_0$.
We claim that $P_0$ is a $T$-periodic probability.
Indeed, we have
\begin{align*}
&\int P_0(s_0,(dx,d\mu))\int P(s_0,(x,\mu),s_0+T,(dy,d\nu))f(s_0+T,y,\nu)\\
=&
\lim_{k\to\infty}\frac{1}{n_k}\sum_{\ell=1}^{n_k}\int P(s_0,(x_0,\mu_0),s_0+\ell T,(dx,d\mu))\\
&\quad\cdot\int P(s_0+\ell T,(x,\mu),s_0+(\ell+1) T,(dy,d\nu))f(s_0+T,y,\nu)\\
=&
\lim_{k\to\infty}\frac{1}{n_k}\sum_{\ell=1}^{n_k}\int P(s_0,(x_0,\mu_0),s_0+(\ell+1) T,(dy,d\nu))f(s_0+T,y,\nu)\\
=&
\lim_{k\to\infty}\frac{1}{n_k}\sum_{\ell=1}^{n_k}\int P(s_0,(x_0,\mu_0),s_0+\ell T,(dy,d\nu))f(s_0+\ell T,y,\nu)\\
&+\lim_{k\to\infty}\frac{1}{n_k}\int P(s_0,(x_0,\mu_0),s_0+(n_k+1)T,(dy,d\nu))f(s_0+(n_k+1)T,y,\nu)\\
&-\lim_{k\to\infty}\frac{1}{n_k}\int P(s_0,(x_0,\mu_0),s_0+ T,(dy,d\nu))f(s_0+ T,y,\nu)\\
=&
\int P_0(s_0,(dy,d\nu))f(s_0,y,\nu)
\end{align*}
for any $T$-periodic function $f\in C_{b}(\mathbb R^+\times\mathbb R^d\times\mathcal P(\mathbb R^d))$,
where the first equality holds by the Feller property of transition probability function $P$,
the second equality holds by Chapman-Kolmogorov equation,
and the last equality holds by the tight of $\{P_{n_k}\}$ as well as the boundedness of $f$.
Therefore, we have $P_0(s,\cdot)=P_0(s+T,\cdot)$ for any $s$.
\end{proof}

We next give a equivalent condition of that of Theorem \ref{main1}, which is a more easily tested condition.
\begin{thm}\label{main2}
The limit \eqref{Eq3.1} in Theorem \ref{main1} can be replaced by the more easily tested function
\begin{equation}\label{Eq3.2}
\lim_{R\to\infty}\liminf_{N\to\infty}\frac 1N\int_0^N P(s,(x,\mu),s+t,U_R^c)dt=0
\end{equation}
for some $s,(x,\mu)$, provided that the transition probability function satisfies the following assumption:
\begin{equation}\label{Eq3.3}
\alpha(R)=\sup_{(x,\mu)\in U_{\beta(R)},0\le s,t\le T}P(s,(x,\mu),s+t,U_R^c)\to 0 ~\quad as ~R\to\infty
\end{equation}
for some function $\beta(R)$ satisfying $\lim_{R\to\infty}\beta(R)=\infty.$
\end{thm}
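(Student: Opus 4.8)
The plan is to show that, under assumption \eqref{Eq3.3}, the continuous Ces\`aro condition \eqref{Eq3.2} and the discrete condition \eqref{Eq3.1} hold for one and the same base point $(s,(x,\mu))$; chaining this equivalence with Theorem \ref{main1} then makes \eqref{Eq3.2} an (easier) criterion for the existence of the periodic Markov process. Both implications rest on one mechanism: the Chapman--Kolmogorov equation lets me split a path at an intermediate instant, and $T$-periodicity reduces any transition $P(\sigma,(y,\nu),\sigma+\tau,\cdot)$ over a span $0\le\tau\le T$, started at an arbitrary time $\sigma$, to one started in $[0,T)$, which is exactly the window controlled by \eqref{Eq3.3}. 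Hence, for every state $(y,\nu)\in U_{\beta(R)}$ the probability of reaching $U_R^c$ within a time $\tau\le T$ is at most $\alpha(R)$, while for $(y,\nu)\in U_{\beta(R)}^c$ it is trivially at most $1$; throughout I use that $0\le P(\cdots)\le1$ so that all boundary corrections are harmless.

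First I treat \eqref{Eq3.1}$\Rightarrow$\eqref{Eq3.2}. Splitting the path from $s$ to $s+kT+r$ at the grid point $s+kT$ and estimating the second leg (of duration $r\le T$) as above gives, for every $k$ and every $r\in[0,T]$,
\begin{equation*}
P(s,(x,\mu),s+kT+r,U_R^c)\le\alpha(R)+P(s,(x,\mu),s+kT,U_{\beta(R)}^c).
\end{equation*}
Integrating in $r$ over $[0,T]$ and averaging over $k=0,\dots,n-1$ yields $\frac1{nT}\int_0^{nT}P(s,(x,\mu),s+t,U_R^c)\,dt\le\alpha(R)+\frac1n\sum_{k=1}^{n}P(s,(x,\mu),s+kT,U_{\beta(R)}^c)$. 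Passing to a subsequence $n_j$ realising the discrete $\liminf$ in \eqref{Eq3.1} for the radius $\beta(R)$, and using that the full $\liminf_{N\to\infty}$ is bounded by the $\liminf$ along $N=n_jT$, I obtain $\liminf_N\frac1N\int_0^N P(s,(x,\mu),s+t,U_R^c)\,dt\le\alpha(R)+\liminf_n\frac1n\sum_{k=1}^n P(s,(x,\mu),s+kT,U_{\beta(R)}^c)$. Letting $R\to\infty$, so that $\alpha(R)\to0$ and $\beta(R)\to\infty$, gives \eqref{Eq3.2}.

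The converse \eqref{Eq3.2}$\Rightarrow$\eqref{Eq3.1} is the crux and needs the reverse estimate, where the trick is to split at a \emph{variable} intermediate time. For fixed $\rho\in[0,T]$ I decompose the path from $s$ to $s+(k+1)T$ at $s+kT+\rho$ and bound the second leg, now of duration $T-\rho\le T$ and ending exactly at the grid point $s+(k+1)T$, by \eqref{Eq3.3}, obtaining
\begin{equation*}
P(s,(x,\mu),s+(k+1)T,U_R^c)\le\alpha(R)+P(s,(x,\mu),s+kT+\rho,U_{\beta(R)}^c).
\end{equation*}
Since this holds for \emph{every} $\rho\in[0,T]$, integrating in $\rho$ turns the discrete left--hand value into a genuine time integral:
\begin{equation*}
P(s,(x,\mu),s+(k+1)T,U_R^c)\le\alpha(R)+\frac1T\int_{kT}^{(k+1)T}P(s,(x,\mu),s+t,U_{\beta(R)}^c)\,dt.
\end{equation*}
Summing over $k=0,\dots,n-1$ gives $\frac1n\sum_{k=1}^n P(s,(x,\mu),s+kT,U_R^c)\le\alpha(R)+\frac1{nT}\int_0^{nT}P(s,(x,\mu),s+t,U_{\beta(R)}^c)\,dt$.

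Finally I pass to the limit. Choosing, for each $R$, a sequence $N_j\to\infty$ realising the continuous $\liminf$ in \eqref{Eq3.2} for the radius $\beta(R)$ and rounding to $n_j=\lfloor N_j/T\rfloor$, the averages over $[0,n_jT]$ and over $[0,N_j]$ differ by $O(1/n_j)$ because the integrand lies in $[0,1]$; hence $\frac1{n_jT}\int_0^{n_jT}P(s,(x,\mu),s+t,U_{\beta(R)}^c)\,dt$ converges to the same continuous $\liminf$. Consequently $\liminf_n\frac1n\sum_{k=1}^n P(s,(x,\mu),s+kT,U_R^c)\le\alpha(R)+\liminf_N\frac1N\int_0^N P(s,(x,\mu),s+t,U_{\beta(R)}^c)\,dt$, and letting $R\to\infty$ yields \eqref{Eq3.1}. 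The main obstacle is precisely producing this reverse bound: a naive split at the grid point only controls the continuous integral by the discrete sum (the easy direction), whereas recovering the discrete sum from the integral forces the variable--time splitting above, which hinges on \eqref{Eq3.3} being a one--period escape bound valid uniformly over start times in $[0,T]$. The only technical points left to verify are the joint measurability of $\rho\mapsto P(s,(x,\mu),s+kT+\rho,U_{\beta(R)}^c)$ needed for Fubini, which follows from continuity in time of the Feller transition function, and the harmless passage between the subsequence $N=nT$ and general $N$.
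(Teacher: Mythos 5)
Your proof is correct and follows essentially the same route as the paper: both directions use the Chapman--Kolmogorov split (at a grid point for \eqref{Eq3.1}$\Rightarrow$\eqref{Eq3.2}, at a variable intermediate time integrated over one period for \eqref{Eq3.2}$\Rightarrow$\eqref{Eq3.1}) together with the uniform one-period escape bound $\alpha(R)$ and periodicity. The only difference is that you spell out the rounding argument ($n_j=\lfloor N_j/T\rfloor$, integrand in $[0,1]$) needed to pass between $\liminf$ over continuous $N$ and $\liminf$ along $N=nT$, a technical point the paper's proof leaves implicit.
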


\begin{proof}
We first prove that the limits \eqref{Eq3.2} and \eqref{Eq3.3} imply \eqref{Eq3.1}.
For any $u\in ((k-1)T,kT)$,
we have
\begin{align*}
&P(s,(x,\mu),s+kT,U_R^c)\\
={}&
\bigg(\int_{U^c_{\beta(R)}}+\int_{U_{\beta(R)}}\bigg)P(s,(x,\mu),s+u,(dy,d\nu))P(s+u,(y,\nu),s+kT,U_R^c)\\
\le{} &
P(s,(x,\mu),s+u,U^c_{\beta(R)})
+
\sup_{(y,\nu)\in U_{\beta(R)},u\in ((k-1)T,kT)}P(s+u,(y,\nu),s+kT,U_R^c).
\end{align*}
Integrating with respect to $u$ from $(k-1)T$ to $kT$ on the both sides in the above inequality, we obtain
$$P(s,(x,\mu),s+kT,U_R^c)
\le
\frac 1T\int_{(k-1)T}^{kT}P(s,(x,\mu),s+u,U^c_{\beta(R)})du
+
\alpha(R).$$
Thus, we get
\begin{align*}
\frac 1n\sum_{k=1}^nP(s,(x,\mu),s+kT,U_R^c)
\le &
\frac 1n\sum_{k=1}^n\frac 1T\int_{(k-1)T}^{kT}P(s,(x,\mu),s+u,U^c_{\beta(R)})du
+
\alpha(R)\\
=&
\frac 1{nT}\int_{0}^{nT}P(s,(x,\mu),s+u,U^c_{\beta(R)})du
+
\alpha(R).
\end{align*}
Taking limit on the both sides in the above inequality,
by limits \eqref{Eq3.1} and \eqref{Eq3.2} we obtain
\begin{align*}
&\lim_{R\to\infty}\liminf_{n\to\infty}\frac 1n\sum_{k=1}^nP(s,(x,\mu),s+kT,U_R^c)\\
\le &
\lim_{R\to\infty}\liminf_{n\to\infty}\frac 1{nT}\int_{0}^{nT}P(s,(x,\mu),s+u,U^c_{\beta(R)})du
+
\lim_{R\to\infty}\alpha(R)\\
= &
0.
\end{align*}
Therefore, we get the desired result, i.e. limit \eqref{Eq3.1} holds.

We then prove that limits \eqref{Eq3.1} and \eqref{Eq3.3} imply \eqref{Eq3.2}.
By Champan-Kolmogorov equation on $\mathbb R^d\times\mathcal P(\mathbb R^d)$, we have
\begin{align*}
&P(s,(x,\mu),s+u,U^c_{R})\\
=&
\bigg(\int_{U^c_{\beta(R)}}+\int_{U_{\beta(R)}}\bigg)P(s,(x,\mu),s+(k-1)T,(dy,d\nu))P(s+(k-1)T,(y,\nu),s+u,U_R^c)\\
\le &
P(s,(x,\mu),s+(k-1)T,U^c_{\beta(R)})
+
\sup_{(y,\nu)\in U_{\beta(R)},u\in ((k-1)T,kT)}P(s+(k-1)T,(y,\nu),s+u,U_R^c)\\
\le &
P(s,(x,\mu),s+(k-1)T,U^c_{\beta(R)})+ \alpha(R)
\end{align*}
for any $u\in ((k-1)T,kT)$.
Integrating with respect to $u$ on $[0,N]$ and summing to $k$ from $1$ to $n$ on the both sides in the above inequality, we obtain
$$\frac 1N\int_0^NP(s,(x,\mu),s+u,U^c_{R})du
\le
\frac 1n\sum_{k=1}^nP(s,(x,\mu),s+(k-1)T,U^c_{\beta(R)})+ \alpha(R).$$
Therefore, by limit \eqref{Eq3.1} and \eqref{Eq3.3} we get
\begin{align*}
&\lim_{R\to\infty}\liminf_{N\to\infty}\frac 1N\int_0^NP(s,(x,\mu),s+u,U^c_{R})du\\
\le &
\lim_{R\to\infty}\liminf_{n\to\infty}\frac 1n\sum_{k=1}^nP(s,(x,\mu),s+(k-1)T,U^c_{\beta(R)})+ \lim_{R\to\infty}\alpha(R)\\
= &
0.
\end{align*}
So limit \eqref{Eq3.2} holds.
The proof is complete.
\end{proof}

As usual, if the transition probability function is time-homogeneous, denoted by $P((x,\mu),t,A)$,
we have the equivalent conditions
for the existence of stationary Markov process with state space $\mathbb R^d\times\mathcal P(\mathbb R^d)$,
which is stated as the following corollary.
Before giving the corollary, we introduce a definition as follows:
a transition probability function $P$ is called stochastically continuous if $P(t_0,(x,\mu),t,U_{\epsilon}(x,\mu))\to 1$ as $t\to t_0$ for any $\epsilon>0$, where $U_{\epsilon}(x,\mu):=\{(y,\nu):|y-x|\vee W_2(\mu,\nu)<\epsilon\}$.
\begin{cor}\label{cor1}
There exists a stationary Markov process with state space $\mathbb R^d\times\mathcal P(\mathbb R^d)$
and with a given time-homogeneous stochastically continuous Feller transition probability function $P((x,\mu),t,A)$
if and only if
$$\lim_{R\to\infty}\liminf_{T\to\infty}\frac 1T\int_{0}^T P((x,\mu),t,U_R^c)dt=0$$
for some $(x,\mu)$.
\end{cor}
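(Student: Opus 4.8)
The plan is to read the corollary as the time-homogeneous specialization of Theorems \ref{main1} and \ref{main2}, but to replace the discrete-skeleton argument by a direct continuous-time Krylov--Bogolioubov construction. The key structural observation is that a time-homogeneous transition function is automatically $T$-periodic for \emph{every} $T>0$, and a stationary distribution is exactly a periodic distribution that is constant in $t$; moreover $P(s,(x,\mu),s+t,\cdot)=P((x,\mu),t,\cdot)$, so the integral in the corollary is precisely the quantity \eqref{Eq3.2}, and the semigroup \eqref{DP} collapses to $P_tf(x,\mu)=\int P((x,\mu),t,(dy,d\nu))f(y,\nu)$. Thus both directions can be argued along the lines of Theorem \ref{main1}, with the continuous-time average playing the role of the Ces\`aro sum.

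For necessity I would mimic the necessity part of Theorem \ref{main1}. If a stationary Markov process exists, its one-dimensional marginal is a single probability $\pi$ on $\mathbb R^d\times\mathcal P(\mathbb R^d)$ with $\pi=\pi P_t$ for every $t\ge 0$, so that integrating against $\pi$ and using invariance yields
\begin{align*}
\frac 1T\int_0^T\int P((x,\mu),t,U_R^c)\,\pi(dx,d\mu)\,dt=\frac 1T\int_0^T\pi(U_R^c)\,dt=\pi(U_R^c),
\end{align*}
which tends to $0$ as $R\to\infty$ by tightness of $\pi$ in the $W_2$ sense of Proposition \ref{Ww}(2). If the corollary's limit were strictly positive for every $(x,\mu)$, Fatou's lemma applied to the time-average would force $\int(\cdots)\,d\pi>0$, contradicting $\pi(U_R^c)\to 0$; hence the limit vanishes for $\pi$-a.e.\ $(x,\mu)$, and in particular for some $(x,\mu)$.

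For sufficiency I would run Krylov--Bogolioubov directly in continuous time, paralleling the sufficiency of Theorem \ref{main1}. Fix a point $(x,\mu)$ realizing the hypothesis and set $\pi_T(\cdot):=\frac 1T\int_0^T P((x,\mu),t,\cdot)\,dt$. The hypothesis makes $\pi_T(U_R^c)$ uniformly small as $R\to\infty$, which by Proposition \ref{Ww}(2) renders $\{\pi_T\}$ relatively compact, so Prohorov's theorem gives a sequence $T_k\to\infty$ with $\pi_{T_k}\to\pi_\ast$ weakly. To verify invariance, for $f\in C_b$ and fixed $s\ge 0$ I would write
\begin{align*}
\pi_{T_k}(P_sf)-\pi_{T_k}(f)=\frac 1{T_k}\bigg(\int_{T_k}^{T_k+s}P_uf(x,\mu)\,du-\int_0^s P_uf(x,\mu)\,du\bigg),
\end{align*}
whose right-hand side is bounded by $2s\|f\|_\infty/T_k\to 0$; since the Feller property gives $P_sf\in C_b$, passing to the limit along $T_k$ yields $\pi_\ast(P_sf)=\pi_\ast(f)$ for all $s\ge 0$, i.e.\ $\pi_\ast$ is invariant. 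Feeding $\pi_\ast$ and the transition function into the existence proposition then produces the stationary Markov process, and portmanteau ($U_R^c$ open) gives $\pi_\ast(U_R^c)\le\liminf_k\pi_{T_k}(U_R^c)$, closing the equivalence.

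The main obstacle, and the precise place where stochastic continuity enters, is this final upgrade. Stochastic continuity guarantees that $t\mapsto P_tf(x,\mu)$ is continuous, so the time-averages and the boundary integrals above are well defined and the limits legitimate; more importantly, it ensures that invariance of $\pi_\ast$ under every $P_s$ corresponds to a genuine continuous-time stationary process rather than invariance along a discrete skeleton only. This is exactly what lets us dispense with the uniform smallness assumption \eqref{Eq3.3} needed in Theorem \ref{main2}. I expect the only delicate bookkeeping to be checking that the limiting measure is truly subsequence-independent where it matters and that the moment control $\pi_\ast(U_R^c)\to0$ is inherited through the weak limit.
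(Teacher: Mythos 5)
Your proposal matches the paper's proof essentially step for step: necessity via invariance of the stationary marginal together with Fatou's lemma, and sufficiency via the continuous-time Krylov-Bogolioubov construction — Ces\`aro averages $\pi_T(\cdot)=\frac 1T\int_0^T P((x_0,\mu_0),t,\cdot)\,dt$, tightness from the hypothesis, Prohorov's theorem, and invariance of the weak limit obtained from the Feller property plus the boundary terms of size $O(s/T_k)$, which is exactly the paper's three-integral computation. The only cosmetic difference is your appeal to Proposition \ref{Ww}(2) where plain tightness and Prohorov (as in the paper) already suffice, so the argument is unchanged.
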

\begin{proof}
Necessity.
Suppose that the limit does not hold, i.e.
there exists a function $g$ with $g(x,\mu)>0$ for any $(x,\mu)\in\mathbb R^d\times\mathcal P(\mathbb R^d)$ such that
$$\lim_{R\to\infty}\liminf_{T\to\infty}\frac 1T\int_{0}^T P((x,\mu),t,U_R^c)dt=g(x,\mu)>0.$$
Let $P_0$ denote the distribution of the stationary Markov process,
which is indeed a stationary probability measure.
Then we have
$$P_0(U_R^c)=\int P_0(dx,d\mu)\frac 1T\int_0^T P((x,\mu),t,U_R^c)dt.$$
Thus by Fatou's lemma we obtain
\begin{align*}
0
=&
\lim_{R\to\infty}P_0(U_R^c)\\
=&
\lim_{R\to\infty}\liminf_{T\to\infty}\int P_0(dx,d\mu)\frac 1T\int_0^T P((x,\mu),t,U_R^c)\\
\ge&
\int P_0(dx,d\mu)\lim_{R\to\infty}\liminf_{T\to\infty}\frac 1T\int_0^T P((x,\mu),t,U_R^c)\\
=&
\int P_0(dx,d\mu)g(x,\mu)
>
0,
\end{align*}
which is a contradiction.
Therefore, we obtain that the following equality
$$\lim_{R\to\infty}\liminf_{T\to\infty}\frac 1T\int_{0}^T P((x,\mu),t,U_R^c)dt=0$$
holds for some $(x,\mu)\in\mathbb R^d\times\mathcal P(\mathbb R^d)$.

Sufficiency.
By the limit,
there exists $(x_0,\mu_0),T_n\to\infty$ such that
$$\frac 1{T_n}\int_{0}^{T_n} P((x_0,\mu_0),t,U_R^c)dt\to 0$$
uniformly in $n$ as $R\to\infty$.
Define $P_n(\cdot):=\frac 1{T_n}\int_{0}^{T_n} P((x_0,\mu_0),t,\cdot)dt$.
Thus $\{P_n\}$ is tight.
By Prohorov theorem, there exists a subsequence, still denoted $\{P_n\}$, such that $P_n$ weakly converges to some probability measure $P_0$.
Therefore, $P_0$ is a stationary probability measure.
Indeed, we have
\begin{align*}
&\int P_0(dx,d\mu)\int P((x,\mu),t,(dy,d\nu))f(y,\nu)\\
=&
\lim_{n\to\infty}\frac{1}{T_n}\int_0^{T_n} ds\int P((x_0,\mu_0),s,(dx,d\mu))\int P((x,\mu),t,(dy,d\nu))f(y,\nu)\\
=&
\lim_{n\to\infty}\frac{1}{T_n}\int_0^{T_n} ds \int P((x_0,\mu_0),s+t,(dy,d\nu))f(y,\nu)\\
=&
\lim_{n\to\infty}\frac{1}{T_n}\int_0^{T_n} du \int P((x_0,\mu_0),u,(dy,d\nu))f(y,\nu)\\
&+\lim_{n\to\infty}\frac{1}{T_n}\int_{T_n}^{T_n +t} du \int P((x_0,\mu_0),u,(dy,d\nu))f(y,\nu)\\
&-\lim_{n\to\infty}\frac{1}{T_n}\int_0^{t} du \int P((x_0,\mu_0),u,(dy,d\nu))f(y,\nu)\\
=&
\lim_{n\to\infty}\frac{1}{T_n}\int_0^{T_n} du \int P((x_0,\mu_0),u,(dy,d\nu))f(y,\nu)\\
=&
\int P_0(dy,d\nu)f(y,\nu)
\end{align*}
for any $f\in C_{b}(\mathbb R^d\times\mathcal P(\mathbb R^d))$.
\end{proof}
\begin{rem}
\begin{enumerate}
\item Although the time-independent probability is a stationary probability,
Corollary \ref{cor1} could not be directly obtained by Theorem \ref{main1}
since the stochastically continuous of transition probability $P$ could not satisfy limit \eqref{Eq3.3}.
However, we note that the proof of Corollary \ref{cor1} is similar with that of Theorem \ref{main1}.

\item It is worth noting that the proof of existence of stationary probability is indeed the Krylov-Bogolioubov theorem on space $\mathbb R^d\times \mathcal P(\mathbb R^d)$.
\end{enumerate}
\end{rem}

\subsection{Convergence}
In this subsection, we consider the convergence of periodic Markov process,
which is exactly said to be the convergence of a subsequence to the periodic Markov process.

\begin{thm}\label{main4}
Assume that the $T$-periodic Feller transition probability function $P$ satisfies
$$\lim_{R\to\infty}\liminf_{n\to\infty}\frac 1n\sum_{k=1}^n P(s,(x,\mu),s+kT,U_R^c)=0$$
for any $s,(x,\mu)$.
Then there exist $T$-periodic probability $\mu_s$ and a subsequence $\{n_j\}\subset \mathbb N$ such that
\begin{equation}\label{Eq4.1}
\lim_{n_j\to\infty}\frac 1{n_jT}\int_t^{t+n_jT}\int_{\mathbb R^d\times \mathcal P(\mathbb R^d)}f(s,x,\mu)\nu_s(dx,d\mu)ds
=
\frac 1T\int_0^T\int_{\mathbb R^d\times \mathcal P(\mathbb R^d)} f(s,x,\mu)\mu_s(dx,d\mu)ds
\end{equation}
for any probability measure $\{\nu_s\}$, $T$-periodic $f\in C_b(\mathbb R^+\times \mathbb R^d\times\mathcal P(\mathbb R^d))$.
In particular, if $\mu_s$ is a unique periodic probability,
then the convergence in above holds for the whole sequence $\mathbb N$.
\end{thm}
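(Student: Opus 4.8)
The plan is to mirror the tightness-and-Prohorov argument from the sufficiency part of Theorem \ref{main1}, replacing the discrete Ces\`aro sum $\frac1n\sum_{k=1}^n$ by the continuous time average $\frac{1}{n_jT}\int_t^{t+n_jT}ds$. Throughout I read $\{\nu_s\}$ as a family carried by the transition function $P$, so that the time integral on the left of \eqref{Eq4.1} genuinely feels the dynamics; this is what links it to the hypothesis. Since \eqref{Eq3.1} is now assumed for every $(s,x,\mu)$, Theorem \ref{main1} already supplies at least one $T$-periodic probability, and the real content here is the ergodic convergence of the time averages to the space-time periodic average. I would first introduce, for each $n$, the occupation-type measure on $\mathbb R^+\times\mathbb R^d\times\mathcal P(\mathbb R^d)$ obtained by averaging $\nu_s$ over the window $[t,t+nT]$, and reduce the statement to showing these measures are tight and that every weak limit is a $T$-periodic probability.

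The main obstacle, and the first substantial step, is \textbf{tightness}. Because the underlying state space is itself a space of measures, $\mathbb R^d\times\mathcal P(\mathbb R^d)$, I would control the tail set $U_R^c=\{(x,\mu):|x|\vee|\mu|_2>R\}$ directly from the hypothesis. Relating the continuous average $\frac{1}{n_jT}\int_t^{t+n_jT}\cdots\,ds$ to the discrete sum $\frac1n\sum_{k=1}^nP(\cdot,s+kT,U_R^c)$, in the spirit of the proof of Theorem \ref{main2}, I would deduce that the averaged mass of $U_R^c$ tends to $0$ as $R\to\infty$ uniformly in $j$. Weak compactness of the averaged measures then follows from Prohorov's theorem; should compactness in $W_2$ be needed, Proposition \ref{Ww}(2) is the precise criterion, the uniform tail bound furnishing the required uniform integrability of $|x|^2$ and $|\mu|_2^2$.

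With tightness established, I would invoke Prohorov's theorem to extract a subsequence $\{n_j\}$ along which the averaged measures converge weakly; I would call the resulting space-time limit $\{\mu_s\}$, so that the left-hand side of \eqref{Eq4.1} converges to $\frac1T\int_0^T\int f(s,x,\mu)\mu_s(dx,d\mu)\,ds$. To check that $\{\mu_s\}$ is genuinely $T$-periodic I would run the same telescoping identity as in Theorem \ref{main1}: testing against a $T$-periodic $f\in C_b$, the Feller property lets me pass the weak limit inside the integral and Chapman--Kolmogorov shifts the time argument by one period, while the two boundary contributions carry a prefactor $1/n_j$ and so vanish, leaving the periodicity relation $\mu_s=\mu_{s+T}$.

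Finally, the uniqueness upgrade is a soft argument. If the $T$-periodic probability is unique, then along the full sequence $n\in\mathbb N$ every subsequence admits, by the tightness just proved, a further weakly convergent subsequence whose limit is, by the periodicity verification, forced to equal the unique $\mu_s$. Since every subsequence thus has a sub-subsequence with one and the same limit, the entire sequence converges, which yields the last assertion. I expect the only delicate points to be the tightness on this measures-over-measures space and the interchange of limit and integration, handled respectively by Proposition \ref{Ww} and the Feller hypothesis.
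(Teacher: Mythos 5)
Your proposal is correct and takes essentially the same route as the paper's proof: the paper forms the per-time Ces\`aro averages $\nu_s^n:=\frac1n\sum_{k=1}^n\nu_{s+kT}$ (your windowed occupation measure is, after exploiting the $T$-periodicity of $f$, exactly the integral of these over one period), proves their tightness from the hypothesis via Fatou under the same ``$\{\nu_s\}$ is propagated by $P$'' reading that you adopt, extracts a weak limit $\mu_s$ by Prohorov, verifies $\mu_{s+T}=\mu_s$ by precisely your telescoping/Chapman--Kolmogorov/Feller computation with vanishing $1/n_j$ boundary terms, and settles the uniqueness claim by the same sub-subsequence argument. The one small imprecision in your write-up is the appeal to ``the spirit of Theorem \ref{main2}'': no analogue of condition \eqref{Eq3.3} is needed or available here, since $\nu_{s+kT}=\nu_s P(s,\cdot,s+kT,\cdot)$ makes the passage from the continuous window average to the discrete sums an exact identity rather than an estimate.
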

\begin{proof}
By the condition there exists a subsequence $\{n_j\}\subset \mathbb N$ with $\lim_{j\to\infty}n_j=\infty$ such that
$$\frac 1{n_j}\sum_{k=1}^{n}P(s,(x,\mu),s+kT,U_R^c)\to 0$$
uniformly in $j$ as $R\to\infty$.
Define $\nu_s^n:= \frac 1n\sum_{k=1}^{n}\nu_{s+kT}$.
Then we have
\begin{align*}
\lim_{R\to\infty}\nu_s^{n_j}(U_R)
= &
\lim_{R\to\infty}\frac 1{n_j}\sum_{k=1}^{n_j}\nu_{s+kT}(U_R)\\
= &
\lim_{R\to\infty}\frac 1{n_j}\sum_{k=1}^{n_j}\int \nu_{s}(dx,d\mu)P(s,(x,\mu),s+kT,U_R)\\
\ge &
\int \nu_{s}(dx,d\mu) \lim_{R\to\infty}\frac 1{n_j}\sum_{k=1}^{n_j}P(s,(x,\mu),s+kT,U_R)\\
= &
1.
\end{align*}
Thus, $\{\nu_s^{n_j}\}$ is tight uniformly in time,
i.e. there exists a subsequence, still denoted $\{\nu_s^{n_j}\}$,
such that $\nu_s^{n_j}$ weakly converges to some probability measure $\mu_s$ uniformly in time.
And we claim that $\mu_s$ is $T$-periodic.
Indeed, we have
\begin{align*}
\mu_{s+T}(A)
= &
\int P(s,(x,\mu),s+T,A)\mu_s(dx,d\mu)\\
= &
\lim_{j\to\infty}\int P(s,(x,\mu),s+T,A)\nu^{n_j}_s(dx,d\mu)\\
= &
\lim_{j\to\infty}\frac 1{n_j}\sum_{k=1}^{n_j} \int P(s,(x,\mu),s+T,A)\nu_{s+k T}(dx,d\mu)\\
= &
\lim_{j\to\infty}\frac 1{n_j}\sum_{k=1}^{n_j} \int P(s+kT,(x,\mu),s+(k+1)T,A) \int P(s,(y,\nu),s+kT,(dx,d\mu))\nu_s(dy,d\nu)\\
= &
\lim_{j\to\infty}\frac 1{n_j}\sum_{k=1}^{n_j} \int P(s,(y,\nu),s+(k+1)T,A)\nu_s(dy,d\nu)\\
= &
\lim_{j\to\infty}\frac 1{n_j}\sum_{k=1}^{n_j} \int P(s,(y,\nu),s+kT,A)\nu_s(dy,d\nu)\\
&-
\lim_{j\to\infty}\frac 1{n_j}\int P(s,(y,\nu),s+T,A)\nu_s(dy,d\nu)\\
&+
\lim_{j\to\infty}\frac 1{n_j}\int P(s,(y,\nu),s+(n_j+1) T,A)\nu_s(dy,d\nu)\\
= &
\lim_{j\to\infty}\frac 1{n_j}\sum_{k=1}^{n_j} \int P(s,(y,\nu),s+kT,A)\nu_s(dy,d\nu)\\
= &
\lim_{j\to\infty}\nu_s^{n_j}(A)
=
\mu_s(A)
\end{align*}
for any $A\in\mathcal B(\mathbb R^d)$.

We next prove equality \eqref{Eq4.1}. We have
\begin{align*}
&\lim_{j\to\infty}\frac 1{n_jT}\int_t^{t+n_jT}\int_{\mathbb R^d\times \mathcal P(\mathbb R^d)}f(s,x,\mu)\nu_s(dx,d\mu)ds\\
=&
\lim_{j\to\infty}\frac 1{n_jT}(\int_t^{t+\epsilon}+\int_{t+\epsilon}^{t+\epsilon+n_jT}+\int_{t+\epsilon+n_jT}^{t+n_jT})\int_{\mathbb R^d\times \mathcal P(\mathbb R^d)}f(s,x,\mu)\nu_s(dx,d\mu)ds\\
=&
\lim_{j\to\infty}\frac 1{n_jT}\int_{t+\epsilon}^{t+\epsilon+n_jT}\int_{\mathbb R^d\times \mathcal P(\mathbb R^d)}f(s,x,\mu)\nu_s(dx,d\mu)ds\\
=&
\lim_{j\to\infty}\frac 1T\int_{t+\epsilon}^{t+\epsilon+T}\int_{\mathbb R^d\times \mathcal P(\mathbb R^d)}f(s,x,\mu)\nu_s^{n_j}(dx,d\mu)ds\\
=&
\frac 1T\int_{t+\epsilon}^{t+\epsilon+T}\int f(s,x,\mu)\mu_s(dx,d\mu)ds\\
=&
\frac 1T\int_{t}^{t+T}\int f(s,x,\mu)\mu_s(dx,d\mu)ds,
\end{align*}
for any $T$-periodic function $f\in C_b(\mathbb R^+\times \mathbb R^d\times \mathcal P(\mathbb R^d))$,
where the second equality holds by the boundedness of $f$,
the third equality holds due to $T$-period of function $f$,
the fourth equality holds by the definition of $\mu^n$,
and the last equality holds by the $T$-period of function $f$ as well as $\mu_s$.

If $T$-periodic probability $\mu_s$ is unique, it is obvious that equality \eqref{Eq4.1} holds for the whole sequence $\mathbb N$.
The proof is complete.
\end{proof}

\subsection{Continuous dependence}
In this subsection, we investigate the continuous dependence on parameters for periodic Markov processes with state space $\mathbb R^d\times\mathcal P(\mathbb R^d)$.

\begin{thm}\label{main6}
Assume that the $T$-periodic Feller transition probability function family $\{P^k\}_{k\in\mathbb N}$ satisfy
$$\lim_{R\to\infty}\liminf_{n\to\infty}\frac 1n\sum_{k=1}^n P^k(s_0,(x_0,\mu_0),s_0+kT,U_R^c)=0$$
for some $s_0,(x_0,\mu_0)$,
and $$\lim_{k\to\infty}P^k(s,(x,\mu),t,A)=P(s,(x,\mu),t,A)$$
for any $t\ge s,(s,\mu),A$.
Then the subsequence of $\{P_0^k\}_{k\in\mathbb N}$ converges to $P_0$,
where $P_0^k,P_0$ denote the $T$-periodic probability for the transition function $P^k,P$, respectively.
\end{thm}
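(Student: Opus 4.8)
The plan is to run the compactness-and-identification scheme from the sufficiency part of Theorem~\ref{main1}, but now along the family index rather than along the time shifts. By Theorem~\ref{main1} the first hypothesis guarantees that each $P^k$ possesses a $T$-periodic probability $P_0^k$, obtained (along a subsequence in $n$) as a weak limit of the Ces\`aro averages $\frac1n\sum_{\ell=1}^nP^k(s_0,(x_0,\mu_0),s_0+\ell T,\cdot)$. First I would establish that the family $\{P_0^k(s,\cdot)\}_{k\in\mathbb N}$ is tight, uniformly in the time parameter $s$; then Prohorov's theorem supplies a subsequence $\{k_j\}$ along which $P_0^{k_j}(s,\cdot)$ converges weakly to some probability $\hat P_0(s,\cdot)$; finally I would verify that $\hat P_0$ is a $T$-periodic probability for the limit kernel $P$ and identify it with $P_0$.

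For the tightness, I would push the uniform hypothesis through the Ces\`aro representation: since $U_R^c$ is open, the Portmanteau inequality controls $P_0^k(s_0,U_R^c)$ by the corresponding limiting average, which the hypothesis forces to be small uniformly in $k$ as $R\to\infty$. Transporting this estimate to an arbitrary time $s$ via the periodicity relation and the Chapman--Kolmogorov/semigroup identity, and using Proposition~\ref{Ww} to turn control of the sublevel sets of $|\cdot|_2$ into weak precompactness, yields uniform tightness of $\{P_0^k(s,\cdot)\}$. A diagonal extraction over a countable set of times, together with the $T$-periodicity in $s$, then produces a single subsequence $\{k_j\}$ working for all $s$.

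The identification is where the actual difficulty lies. Writing the invariance relation for $P_0^{k_j}$ tested against a bounded continuous $T$-periodic $f$ as
\[
\int f(s+T,y,\nu)\,P_0^{k_j}(s+T,dy,d\nu)=\int \big(P^{k_j}_Tf\big)(s,x,\mu)\,P_0^{k_j}(s,dx,d\mu),
\]
where $(P^{k_j}_Tf)(s,x,\mu):=\int P^{k_j}(s,(x,\mu),s+T,dy,d\nu)f(s+T,y,\nu)$ is bounded and continuous by the Feller property, I would let $j\to\infty$. The left side tends to $\int f\,d\hat P_0(s+T,\cdot)$ by weak convergence. The right side is the genuine obstruction: the kernels converge only pointwise, so $P^{k_j}_Tf\to P_Tf$ only pointwise (by bounded convergence in the inner integral), while $P_0^{k_j}(s,\cdot)$ converges only weakly, and weak convergence tested against a merely pointwise-convergent sequence of integrands need not pass to the limit. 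I would dispatch this with the splitting
\[
\Big|\int P^{k_j}_Tf\,dP_0^{k_j}-\int P_Tf\,d\hat P_0\Big|
\le
\int\big|P^{k_j}_Tf-P_Tf\big|\,dP_0^{k_j}
+\Big|\int P_Tf\,d\big(P_0^{k_j}-\hat P_0\big)\Big|,
\]
in which the last term vanishes because $P_Tf$ is bounded continuous (the Feller property being inherited by the limit kernel $P$), while the first term is handled by the uniform tightness: outside a fixed compact set its contribution is at most $2\|f\|_\infty$ times the uniformly small tail mass, and on the compact set one upgrades the pointwise convergence of $P^{k_j}_Tf$ to uniform convergence, e.g.\ from equicontinuity furnished by the Feller regularity of the kernels.

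Passing to the limit then gives $\int f\,d\hat P_0(s+T,\cdot)=\int (P_Tf)(s,x,\mu)\,\hat P_0(s,dx,d\mu)$ for every bounded continuous $T$-periodic $f$, i.e.\ $\hat P_0(s+T,A)=\int P(s,(x,\mu),s+T,A)\,\hat P_0(s,dx,d\mu)$, so $\hat P_0$ is a $T$-periodic probability for $P$; since $P_0$ denotes this periodic probability, we conclude $\hat P_0=P_0$ (uniqueness being invoked here if available, otherwise $\hat P_0$ is simply one such periodic probability), which is the asserted convergence of the subsequence $\{P_0^{k_j}\}$. I expect the first term of the splitting---turning the pointwise convergence of $P^{k_j}_Tf$ into effective uniform control against the tight family---to be the decisive technical point.
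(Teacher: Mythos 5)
Your proposal follows the same route as the paper's own proof: uniform tightness of the family $\{P_0^k(s,\cdot)\}$ deduced from the first hypothesis, Prohorov extraction of a weakly convergent subsequence, and identification of the limit as a $T$-periodic probability for $P$ by passing to the limit in the invariance relation tested against $T$-periodic $f\in C_b$. The difference is one of rigor at the identification step. The paper simply writes
\[
\int f(s+T,y,\nu)\int P_0(s,(dx,d\mu))\,P(s,(x,\mu),s+T,(dy,d\nu))
=\lim_{k\to\infty}\int f(s+T,y,\nu)\int P_0^k(s,(dx,d\mu))\,P^k(s,(x,\mu),s+T,(dy,d\nu))
\]
with no justification, and this equality is exactly the double limit (pointwise-convergent kernels integrated against weakly convergent measures) that you isolate as ``the genuine obstruction.'' Your splitting is the right skeleton, and your treatment of the second term is fine, modulo the implicit assumption that the limit kernel $P$ is itself Feller, which the theorem tacitly makes by positing that $P_0$ exists.

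However, your resolution of the first term has a gap: equicontinuity of the family $\{P^{k}_Tf\}_k$ is \emph{not} furnished by the Feller property, which only gives continuity of each $P^k_Tf$ separately, with no uniformity in $k$. Without equicontinuity --- or some substitute such as locally uniform convergence $P^k\to P$, or uniform continuity estimates on the kernels --- pointwise convergence of bounded continuous integrands against a weakly convergent, tight sequence of measures genuinely fails to pass to the limit (moving-bump counterexamples: $\mu_k=\delta_{x_k}$, $x_k\to x$, $g_k$ a bump of height $1$ at $x_k$ shrinking so that $g_k\to 0$ pointwise, yet $\int g_k\,d\mu_k=1$). So your proof is incomplete at exactly the point you yourself flag as decisive. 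That said, the paper's own proof silently commits the same sin at the corresponding equality, so your attempt is, if anything, a more honest account of where the real difficulty of this theorem lies; closing it would require an additional hypothesis beyond those stated.
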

\begin{proof}
By Theorem \ref{main1}, we get a sequence of $T$-periodic probability measures $\{P_0^k\}_{k\in\mathbb N}$,
and $P_0^k(s,U_R)>1-\epsilon$ for any $k,0<\epsilon<1$ if $R$ is sufficiently large.
That is to say that $\{P_0^k\}_{k\in\mathbb N}$ is tight uniformly in time.
Therefore, there exists a subsequence, still denoted $\{P_0^k\}_{k}$,
such that $P_0^k$ weakly converges to some probability measure $P_0$ uniformly in time.
We claim that $P_0$ is the $T$-periodic probability for the transition probability function $P$.
Indeed, we have
\begin{align*}
&\int P_0(s,(dx,d\mu))\int P(s,(x,\mu),s+T,(dy,d\nu))f(s+T,y,\nu)\\
=&
\int f(s+T,y,\nu)\int P_0(s,(dx,d\mu))P(s,(x,\mu),s+T,(dy,d\nu))\\
=&
\lim_{k\to\infty}\int f(s+T,y,\nu)\int P^k_0(s,(dx,d\mu))P^k(s,(x,\mu),s+T,(dy,d\nu))\\
=&
\lim_{k\to\infty}\int f(s+T,y,\nu)P^k_0(s+T,(dy,d\nu))\\
=&
\int f(s+T,y,\nu)P_0(s+T,(dy,d\nu))
\end{align*}
for any $T$-periodic function $f\in C_b(\mathbb R^+\times\mathbb R^d\times\mathcal P(\mathbb R^d))$.
\end{proof}

\section{Periodic solutions for MVSDEs}
In this section, we investigate the existence, convergence and continuous dependence on parameters of periodic solutions
for MVSDEs under periodic distribution-dependent Lyapunov conditions,
which is obtained by the periodic Markov processes with state space $\mathbb R^d\times\mathcal P(\mathbb R^d)$.
To obtain the main results in this section, we first simultaneously truncate both space and distribution variables
of the coupled MVSDEs' coefficients,
and utilize the periodic Lyapunov conditions to get the existence of periodic probability
for transition probability function generated by the coupled MVSDEs.
We then utilize the results about coupled MVSDEs to obtain the existence of periodic solutions for MVSDEs.
Additionally, we study the convergence and continuous dependence on parameters of periodic solutions for MVSDEs under periodic Lyapunov conditions,
which is also obtained in the sense of the convergence and continuous dependence of periodic Markov processes.
So, we devide this section into three subsection:
Existence, Convergence and Continuous dependence.

\subsection{Existence}
In this subsection, we apply Theorem \ref{main2} for McKean-Vlasov SDEs to obtain the existence of periodic solutions
under periodic distribution-dependent Lyapunov conditions.
However, the initial value $(x,\mu)$ for the transition probability
of the Markov process with state space $\mathbb R^d\times\mathcal P(\mathbb R^d)$
is not a `pair', that is to say that $\mu$ is arbitrarily chosen from $\mathcal P(\mathbb R^d)$,
and may not equal to $\delta_x$ which is a delta measure,
so here we first need to deal with coupled McKean-Vlaosv SDEs
\begin{align}\label{CMVSDE}
\left\{
\begin{aligned}
&dX_t= b(t,X_t,\mathcal{L}_{X_t})dt+ \sigma(t,X_t,\mathcal{L}_{X_t})dW_t,~X_s=\xi,\\
&d\bar X_t= \bar b(t,\bar X_t,\mathcal{L}_{X_t})dt+ \bar \sigma(t,\bar X_t,\mathcal{L}_{X_t})dW_t,~\bar X_s=x,
\end{aligned}
\right.
\end{align}
where $\mathcal L_{\xi}=\mu$, then study the existence of periodic solutions for MVSDEs.

If the coefficients of the coupled MVSDEs are locally Lipschitz and satisfy locally linear growth conditions,
then we generally utilize the truncation method of coefficients to obtain the existence of solutions as follows:
for any $n\geq 1, t\in [0,T],x\in C[0,T]$ and $\mu \in \mathcal P(C[0,T])$, define
\begin{align*}
b^n(t, x_t, \mu_t):= b(t, \phi_n(x_t), \mu_t \circ \phi_n),
\sigma^n(t, x_t, \mu_t):= \sigma(t, \phi_n(x_t), \mu_t \circ \phi_n),
\end{align*}
where $\phi_n(x_t):=x_{t\wedge \tau_x^n},\tau_x^n:=\inf\{t\ge s: |x_t|\ge n\}$.
Thus for each $n\geq 1$, $b^n, \sigma^n$ are Lipschitz and satisfy the linear growth condition.
Therefore, by \cite[Theorem 2.1]{Wang_18}, the equation
\begin{equation}\label{TMVSDE}
\left\{
\begin{aligned}
dX^n_t&= b^n(t,X^n_t,\mathcal{L}_{X^n_t})dt+ \sigma^n(t,X^n_t,\mathcal{L}_{X^n_t})dW_t\\
X^n_s &= X_s \\
\end{aligned}
\right.
\end{equation}
has a unique solution $X^n$ with finite second-moment.
Define $\tau^n:= \inf\{ t\geq s: |X^n_t|\geq n \}$.
By the definition of $\phi_n$, we get
\begin{align*}
\phi_n(X ^n_t)= X^n_{t\wedge \tau^n_{X^n}}= X^n_{t\wedge \tau^n},
\end{align*}
and for every measurable set $A \subset \mathbb R^d$, we have
\begin{align*}
&{(\mathcal{L}_{X^n_t}\circ \phi_n^{-1}})(A)= P(X^n_t \in \phi_n^{-1}(A))
={} P(\phi_n(X^n_t)\in A)=\mathcal{L}_{\phi_n (X^n_t)}(A)= \mathcal{L}_{X^n_{t\wedge \tau^n}}(A).
\end{align*}
Hence the equation \eqref{TMVSDE} is equivalent to the following SDE:
\begin{equation}\label{TEMVSDE}
\left\{
\begin{aligned}
dX^n_t&= b(t,X^n_{t\wedge \tau^n},\mathcal{L}_{X^n_{t\wedge \tau^n}})dt+ \sigma(t,X^n_{t\wedge \tau^n},\mathcal{L}_{X^n_{t\wedge \tau^n}})dW_t\\
X^n_s &= X_s. \\
\end{aligned}
\right.
\end{equation}
Therefore, the stopped process $Y^n_t:=X^n_{t\wedge \tau^n}$ satisfies the following equation:
\begin{align}\label{SMVSDE}
\left\{
\begin{aligned}
dY^n_t
={} &
1_{[s,\tau^n]}(t)b(t,Y^n_t,\mathcal{L}_{Y^n_t})dt
 +
1_{[s,\tau^n]}(t)\sigma(t,Y^n_t,\mathcal{L}_{Y^n_t})dW_t\\
Y^n_s = {}& X_s, \\
\end{aligned}
\right.
\end{align}
where $1$ denotes the indicator function.
Similarly, define $\bar X_t^n$ by the following equation:
\begin{equation}\label{TEMVSDE}
\left\{
\begin{aligned}
d\bar X^n_t&= \bar b(t,\bar X^n_{t\wedge \tau^n},\mathcal{L}_{X^n_{t\wedge \tau^n}})dt+ \bar \sigma(t,\bar X^n_{t\wedge \tau^n},\mathcal{L}_{X^n_{t\wedge \tau^n}})dW_t\\
\bar X^n_s &= \bar X_s. \\
\end{aligned}
\right.
\end{equation}
Then the stopped process $\bar Y^n_t:=\bar X^n_{t\wedge \tau^n}$ satisfies the following equation:
\begin{align}\label{SMVSDE}
\left\{
\begin{aligned}
d\bar Y^n_t
={} &
1_{[s,\tau^n]}(t)\bar b(t,\bar Y^n_t,\mathcal{L}_{Y^n_t})dt
 +
1_{[s,\tau^n]}(t)\bar \sigma(t,\bar Y^n_t,\mathcal{L}_{Y^n_t})dW_t\\
\bar Y^n_s = {}& \bar X_s.\\
\end{aligned}
\right.
\end{align}

We note that $\tau^n\to\infty,\mathcal L_{Y^n_t}\to\mathcal L_{X_t},\mathcal L_{\bar Y^n_t}\to\mathcal L_{\bar X_t}$ as $n\to\infty$
if the coefficients satisfy the conditions in \cite{LM2}, where $X_t,\bar X_t$ are the solutions for the coupled MVSDEs.
However, in this paper for simplicity we do not introduce the conditions in \cite{LM2},
and directly define that the solutions of equations \eqref{CMVSDE} is called regular at $(\xi,x)$ if $\tau^n\to\infty,\mathcal L_{Y^n_t}\to\mathcal L_{X_t},\mathcal L_{\bar Y^n_t}\to\mathcal L_{\bar X_t}$ as $n\to\infty$.

We give the following Lyapunov condition,
where the Lyapunov function is periodic in time and simultaneously depends on space and distribution variables,
to obtain the existence of periodic probability for the coupled MVSDEs \eqref{CMVSDE}.
\begin{enumerate}
\item [(H)](Lyapunov condition)
There exists a nonnegative $T$-periodic function $V\in C^{1,2,(1,1)}(\mathbb R^+\times\mathbb R^d \times \mathcal P_2(\mathbb R^d))$ such that
for all $(t,x,\mu)\in [0,\infty)\times\mathbb R^d \times \mathcal P_2(\mathbb R^d)$
\begin{align*}
\sup_{|x|\vee|\mu|_2> R} LV(t,x,\mu)&=-A_R  \to-\infty ~as~R\to\infty, \\
V_R(t):= &\inf_{|x|\vee|\mu|_2> R}V(t,x, \mu)\to \infty ~as~ R\to \infty,
\end{align*}
where \begin{align*}
C^{1,2,(1,1)}(\mathbb R^+\times\mathbb R^d\times \mathcal P_2(\mathbb R^d))
:={}&\{f:\mathbb R^+\times\mathbb R^d\times\mathcal P_2(\mathbb R^d)\to\mathbb R|
f(\cdot,x,\mu)\in C^1(\mathbb R^+) \quad \hbox{for }x,\mu,\\
f(t,\cdot, \mu)\in C^2(\mathbb R^d) \quad \hbox{for }t,\mu,
&\quad f(t,x,\cdot)\in C^{(1,1)}(\mathcal P_2(\mathbb R^d))\quad \hbox{for }t,x\},
\end{align*}
and $LV$ is defined as follows:
\begin{align*}
LV(t,x,\mu)
&:={}
\partial_t V(t,x,\mu)+\bar b(t,x,\mu)\cdot \partial_x V(t,x,\mu)+ \frac 12tr\big((\bar\sigma\bar\sigma^{\top})(t,x,\mu)\cdot\partial_x^2 V(t,x,\mu)\big)\\
+&{}\int \bigg[b(t,y,\mu)\cdot\partial_{\mu} V(t,x,\mu)(y)
 + \frac 12tr\big((\sigma\sigma^{\top})(t,y,\mu)\cdot\partial_y \partial_{\mu} V(t,x,\mu)(y)\big)\bigg]\mu(dy).
\end{align*}

\end{enumerate}

\begin{lem}\label{main3}
Assume that the coupled McKean-Vlasov SDEs \eqref{CMVSDE} have a unique regular solutions for at least one $(\xi,x)$ with finite second-moment,
condition (H) holds and coefficients $b,\bar b,\sigma,\bar\sigma$ are $T$-periodic.
Then there exists a $T$-periodic probability for the transition probability function generated by coupled MVSDEs \eqref{CMVSDE}.
\end{lem}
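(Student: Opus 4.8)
The plan is to realize $P$, the transition probability function of the coupled system \eqref{CMVSDE}, as a $T$-periodic Feller transition function on $\mathbb R^d\times\mathcal P(\mathbb R^d)$ and then verify the hypotheses of Theorem \ref{main2}, so that Theorem \ref{main1} yields the periodic probability. First I would record the structural facts. The pair $(\bar X_t,\mathcal L_{X_t})$ is Markov on $\mathbb R^d\times\mathcal P(\mathbb R^d)$ (the law component evolving as a deterministic flow driven by $\mu=\mathcal L_\xi$), so its one-step laws define $P(s,(x,\mu),t,\cdot)$; the $T$-periodicity of $b,\bar b,\sigma,\bar\sigma$ together with uniqueness of solutions forces $P(s+T,(x,\mu),t+T,\cdot)=P(s,(x,\mu),t,\cdot)$, and well-posedness plus continuous dependence on the initial pair $(\xi,x)$ (with $\mathcal L_\xi=\mu$) gives the Feller property. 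Taking $(x_0,\mu_0)=(x,\mathcal L_\xi)$ to be the pair at which a unique regular solution exists, it then suffices to verify \eqref{Eq3.2} at this point and \eqref{Eq3.3} for the transition function.

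The analytic heart is a Dynkin estimate for the Lyapunov function. Applying the mean-field It\^o formula to $V(t,\bar X_t,\mathcal L_{X_t})$, with $LV$ the generator of condition (H), along the stopped processes $\bar Y^n,\mathcal L_{Y^n}$ from the truncation \eqref{TMVSDE}, taking expectations to kill the martingale term, and passing $n\to\infty$ by the regularity $\tau^n\to\infty$ and $\mathcal L_{Y^n_t}\to\mathcal L_{X_t}$, I would obtain
\begin{equation*}
E\,V(s+t,\bar X_{s+t},\mathcal L_{X_{s+t}})=V(s,x,\mu)+E\int_s^{s+t}LV(r,\bar X_r,\mathcal L_{X_r})\,dr .
\end{equation*}
By condition (H) there is $R_0$ with $LV\le -A_{R_0}<0$ on $U_{R_0}^c$, while $LV$ is bounded above on the (second-moment controlled) region $U_{R_0}$, so $C_*:=\sup (LV)^+<\infty$ is a global upper bound. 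Since $P(s,(x,\mu),s+t,U_R^c)=E\,\mathbf 1_{U_R^c}(\bar X_{s+t},\mathcal L_{X_{s+t}})$, splitting the time integral at threshold $R$ and using $V\ge 0$, $LV\le -A_R$ on $U_R^c$ and $LV\le C_*$ on $U_R$ gives
\begin{equation*}
A_R\,E\int_s^{s+N}\mathbf 1_{U_R^c}(\bar X_r,\mathcal L_{X_r})\,dr\le V(s,x,\mu)+C_*N ,
\end{equation*}
whence $\liminf_{N\to\infty}\frac1N\int_0^N P(s,(x,\mu),s+t,U_R^c)\,dt\le C_*/A_R\to 0$ as $R\to\infty$, which is \eqref{Eq3.2}.

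For \eqref{Eq3.3} I would run the same identity on a bounded horizon $t\le T$ to get $E\,V(s+t,\bar X_{s+t},\mathcal L_{X_{s+t}})\le \sup_{U_{\beta(R)}}V+C_*T$ for $(x,\mu)\in U_{\beta(R)}$, and then apply a Chebyshev bound using the coercivity $\inf_{\tau\in[0,T]}V_R(\tau)\to\infty$. This uniform-in-time coercivity follows from the pointwise statement $V_R(t)\to\infty$ in (H): $V_R$ is nondecreasing in $R$, and a Dini argument on the compact interval $[0,T]$ (using $T$-periodicity) upgrades pointwise to uniform divergence. Choosing $\beta(R)\to\infty$ slowly enough that $\sup_{U_{\beta(R)}}V=o\big(\inf_{\tau\in[0,T]}V_R(\tau)\big)$ then forces $\alpha(R)\to 0$, giving \eqref{Eq3.3}. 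With \eqref{Eq3.2} and \eqref{Eq3.3} in hand, Theorem \ref{main2} produces \eqref{Eq3.1} at $(x_0,\mu_0)$, and Theorem \ref{main1} delivers the $T$-periodic probability for $P$.

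I expect the main obstacle to be the rigorous justification of the Dynkin identity: applying the Lions-derivative It\^o formula to the possibly unbounded $V$ along the coupled McKean--Vlasov dynamics requires localizing through the truncated equations and then interchanging limits, which is precisely where the regularity hypothesis ($\tau^n\to\infty$ and convergence of the marginal laws) enters; for the upper bound needed in \eqref{Eq3.3} one may soften this to a Fatou inequality after stopping, but uniformity over the initial data in $U_{\beta(R)}$ still relies on well-posedness of \eqref{CMVSDE} across initial conditions. A secondary technical point is the control of $LV$ and $V$ on the sets $U_R$, which are bounded in $|\cdot|\vee|\cdot|_2$ but not compact in $\mathcal P(\mathbb R^d)$; this is handled by the built-in second-moment control together with the growth of the coefficients and of the derivatives of $V$.
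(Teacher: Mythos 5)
Your proposal follows essentially the same route as the paper's proof: apply the localized It\^o/Dynkin formula to $V$ along the coupled dynamics \eqref{CMVSDE}, use condition (H) to obtain the time-averaged estimate $\frac1N\int_0^N P(s,(x,\mu),s+t,U_R^c)\,dt\le C/A_R$ (i.e.\ \eqref{Eq3.2}) together with a Chebyshev-type bound giving \eqref{Eq3.3}, and then invoke Theorems \ref{main2} and \ref{main1}. Your write-up is if anything more careful than the paper's (explicit Feller/periodicity verification, explicit choice of $\beta(R)$, and the uniform-in-$t$ coercivity of $V_R$, which the paper uses silently), with the one caveat that your ``Dini argument'' should be handled carefully: $V_R(t)=\inf_{U_R^c}V(t,\cdot,\cdot)$ is only upper semicontinuous in $t$, so the pointwise-to-uniform upgrade is not automatic and is better treated as part of the reading of condition (H).
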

\begin{proof}
By It\^o's formula, we have
\begin{align}\label{Eq3.4}
EV(t\wedge \tau^n,\bar Y^n_t,\mathcal L_{Y^n_t})-V(s,x,\mu)
=
E\int_s^{\tau_n\wedge t}LV(r, \bar Y^n_r,\mathcal L_{Y^n_r})dr.
\end{align}
On the other hand, by condition (H) the following inequality holds
\begin{align}\label{IEq3.5}
LV(r, \bar Y^n_r,\mathcal L_{Y^n_r})
\le
-1_{\{|\bar {Y}^n_r|\vee|\mathcal L_{Y^n_r}|_2>R\}}A_R
+
\sup_{(y,\nu)}LV(r,y,\nu).
\end{align}
Thus, combining equality \eqref{Eq3.4} and inequality \eqref{IEq3.5}, we obtain that there exist constants $C_1,C_2>0$ such that
\begin{align*}
A_R E\int_s^{\tau_n\wedge t}1_{\{|\bar {Y}^n_r|\vee|\mathcal L_{Y^n_r}|_2>R\}}dr
\le
C_1(t-s)+C_2.
\end{align*}
Note that we have $\tau_n\to\infty,\mathcal L_{Y^n_r}\to \mathcal L_{X_r},\mathcal L_{\bar Y^n_r}\to \mathcal L_{\bar X_r} ~a.s.$ as $n\to\infty$ since there exists a unique regular solution for the coupled MVSDE \eqref{CMVSDE}.
Thus, let $n\to\infty$ and divide $t-s$ on both sides in the above inequality, we get
\begin{align*}
\frac 1{t-s}\int_s^tP(s,(x,\mu),r,U^c_R)dr
\le
\frac{C_3}{A_R}
\to 0 \quad as \quad R\to\infty,
\end{align*}
where $C_3$ is a nonnegative constant.
Let $u=r-s$, we have
\begin{align}\label{Eq3.6}
\frac 1{t-s}\int_0^{t-s}P(s,(x,\mu),s+u,U^c_R)du
\le
\frac{C_3}{A_R}
\to 0 \quad as \quad R\to\infty,
\end{align}
which suggests that limit \eqref{Eq3.2} in Theorem \ref{main2} holds.

We next show that limit \eqref{Eq3.3} in Theorem \ref{main2} holds.
By It\^o's formula, we have
$$EV(t,\bar X_t,\mathcal L_{X_t})-V(s,x,\mu)
=
E\int_s^tLV(r,\bar X_r,\mathcal L_{X_r})dr
\le
\lambda (t-s),$$
where the inequality holds by condition (H) that $LV(t,x,\mu)\le \lambda t$ for a sufficiency large constant $\lambda$.
Therefore, together with Chebyshev's inequality, we get
\begin{align}\label{Eq3.7}
P(s,(x,\mu),t,U_R^c)
\le
\frac{V(s,x,\mu)+\lambda (t-s)}{\inf_{(y,\nu)\in U_R^c}V(r,y,\nu)}.
\end{align}
In summary, combining \eqref{Eq3.6}, \eqref{Eq3.7} and Theorem \ref{main2}, we obtain that there exists a $T$-periodic probability.
\end{proof}
\begin{rem}
Note that the above Theorem could not imply the existence of periodic solution for the coupled MVSDEs
since the corresponding stochastic process with the periodic probability may not have the form of `$\delta_{x}\times\mu$',
which is the initial values' form.
\end{rem}

We then apply Lemma \ref{main3} to obtain the existence of periodic solution for MVSDE,
which is one of the main results in this paper, and described by the following theorem.
\begin{thm}\label{main8}
Assume that the conditions of Lemma \ref{main3} hold if $\bar b,\bar \sigma$ are replaced by $b,\sigma$.
Then there exists a $T$-periodic solution for MVSDE.
\end{thm}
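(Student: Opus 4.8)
The plan is to specialize the coupled system \eqref{CMVSDE} to the diagonal coefficients $\bar b=b,\ \bar\sigma=\sigma$, invoke Lemma~\ref{main3} to obtain a $T$-periodic probability on $\mathbb R^d\times\mathcal P(\mathbb R^d)$, and then ``project'' it onto a genuine periodic law-flow of the MVSDE \eqref{main}. First I would record the structural fact that along the coupled dynamics the distribution component of the state is deterministic: writing $\Phi_{s,t}(\mu):=\mathcal L_{X_t}$ for the law-flow of \eqref{main} started from $\mathcal L_{X_s}=\mu$, the generated transition function factorizes as $P(s,(x,\mu),t,\cdot)=\mathcal L_{\bar X_t^{\,s,x,\mu}}\otimes\delta_{\Phi_{s,t}(\mu)}$. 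In particular the distribution-marginal $\Lambda_s$ of the periodic probability $P_0(s,\cdot)$ furnished by Lemma~\ref{main3} is invariant under the nonlinear period map $\Psi_s:=\Phi_{s,s+T}$, i.e. $(\Psi_s)_*\Lambda_s=\Lambda_{s+T}=\Lambda_s$.

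The hypothesis $\bar b=b,\ \bar\sigma=\sigma$ serves to restore the self-consistency flagged in the Remark preceding the statement. With these coefficients the second equation of \eqref{CMVSDE} is exactly the decoupling of the MVSDE, so if $\bar X_s$ has law $\mu$ then $\mathcal L_{\bar X_t}=\Phi_{s,t}(\mu)$. I would use this to show that the convex set of consistent laws $\mathcal C:=\{\int(\mu\otimes\delta_\mu)\,\Theta(d\mu):\Theta\in\mathcal P(\mathcal P(\mathbb R^d))\}$ is weakly closed (test against $g(x)F(\mu)$ versus $\langle\mu,g\rangle F(\mu)$, both bounded continuous in the state, hence stable under weak limits) and is preserved by the lifted semigroup, the induced action being $\Theta\mapsto(\Phi_{s,t})_*\Theta$. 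Running the Krylov--Bogolioubov construction inside Lemma~\ref{main3} from a consistent initial law $\mu_0\otimes\delta_{\mu_0}$, which is admissible because the tightness estimate \eqref{Eq3.6} is uniform in the starting state, then yields a consistent periodic probability $P_0(s,\cdot)=\int(\mu\otimes\delta_\mu)\,\Lambda_s(d\mu)$, explaining why the special case produces an object attached to \eqref{main} rather than a spurious coupled-system one.

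It remains to replace the $\Psi_s$-invariant measure $\Lambda_s$ by a single periodic orbit, since a periodic solution of \eqref{main} is precisely a fixed point $\nu_s^*$ of $\Psi_s$, with periodic law-flow $\nu_t^*:=\Phi_{s,t}(\nu_s^*)$. I would obtain it by a Schauder argument driven by (H): together with the bound $LV\le\lambda$ already exploited in Lemma~\ref{main3}, condition (H) yields a nonempty convex set $K\subset\mathcal P_2(\mathbb R^d)$ of laws of uniformly bounded Lyapunov energy, which is $W_2$-compact by Proposition~\ref{Ww}(2) (tightness together with uniform integrability of second moments) and satisfies $\Psi_s(K)\subseteq K$ through the Lyapunov drift; the Feller property of $P$ makes $\Psi_s$ continuous on $K$, so Schauder's fixed-point theorem produces $\nu_s^*\in K$ with $\Psi_s(\nu_s^*)=\nu_s^*$. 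Periodicity of the coefficients then gives $\nu_{t+T}^*=\nu_t^*$, and any solution $X_t$ of \eqref{main} with $\mathcal L_{X_s}=\nu_s^*$ satisfies $\mathcal L_{X_t}=\nu_t^*$, hence is the desired $T$-periodic solution.

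The main obstacle is exactly this last descent from the \emph{linear} lifted dynamics back to the \emph{nonlinear} flow. Lemma~\ref{main3} and the Krylov--Bogolioubov scheme deliver only an invariant measure $\Lambda_s$ on $\mathcal P(\mathbb R^d)$, and its barycenter $\int\mu\,\Lambda_s(d\mu)$ fails self-consistency because $\Phi_{s,t}$ is nonlinear, so in general $\int\Phi_{s,t}(\mu)\,\Lambda_s(d\mu)\neq\Phi_{s,t}\big(\int\mu\,\Lambda_s(d\mu)\big)$; extracting a true fixed point cannot be done by averaging alone. It is the combination of convexity, the Feller continuity of $\Psi_s$, and the $W_2$-compactness supplied by (H) via Proposition~\ref{Ww}(2) that closes the argument, and essentially all the remaining work lies there rather than in the transfer from the coupled system.
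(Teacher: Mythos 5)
Your first two steps are essentially the paper's proof made more explicit: the paper specializes the coupled system to $\bar b=b$, $\bar\sigma=\sigma$, notes $X_t=\bar X_t$ so that $P(s,(x,\delta_x),t,\cdot)=\mathcal L_{X_t}\times\delta_{\mathcal L_{X_t}}$, invokes Lemma \ref{main3}, and then projects the resulting periodic probability to its first marginal $\pi_t(\cdot)=P_0(t,\cdot\times\mathcal P(\mathbb R^d))$; your factorization of the kernel, the consistency class $\mathcal C$, and the observation that the Krylov--Bogolioubov construction run from $\mu_0\otimes\delta_{\mu_0}$ stays inside $\mathcal C$ are a careful rendering of exactly this. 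The divergence is at the end: the paper concludes by asserting that the MVSDE started from $\pi_s$ is $T$-periodic, whereas you replace this by a Schauder fixed-point argument for the period map $\Psi_s$ on a convex, compact, invariant set $K$.

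That Schauder step is where your proposal has a genuine gap: none of the three properties you need of $K$ follows from condition (H). (i) Convexity: your $K$ is a sublevel set of the energy $\nu\mapsto\int V(s,x,\nu)\,\nu(dx)$, which is not affine in $\nu$ because $V$ depends on the measure variable, so such sets need not be convex. (ii) Invariance $\Psi_s(K)\subseteq K$: condition (H) yields only $LV\le\lambda$ globally together with $LV\le-A_R$ off $U_R$, hence only the time-averaged occupation bound \eqref{Eq3.6} that Krylov--Bogolioubov consumes; over a single period during which $|\mathcal L_{X_t}|_2$ stays below $R$ the energy may grow by as much as $\lambda T$, so no sublevel set is shown to be mapped into itself by $\Psi_s$. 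For that one needs a genuine dissipativity inequality such as $LV\le C-cV$, which is strictly stronger than (H). (iii) $W_2$-compactness via Proposition \ref{Ww}(2) requires uniform integrability of second moments on $K$, i.e.\ superquadratic growth of $V$, which $V_R(t)\to\infty$ does not provide. So the fixed point $\nu_s^*$ is never actually produced. I would add that your diagnosis of the obstacle --- an invariant measure $\Lambda_s$ of the nonlinear map $\Psi_s$ gives the barycenter identity $\int\Psi_s(\mu)\,\Lambda_s(d\mu)=\int\mu\,\Lambda_s(d\mu)$, which is not the fixed-point identity $\Psi_s(\pi_s)=\pi_s$ --- is precisely the point that the paper's own argument compresses into the sentence that the law flow started from $\pi_s$ is ``obviously'' a $T$-periodic probability; isolating it is valuable, but the repair you propose does not close it under the stated hypotheses.
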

\begin{proof}
By Lemma \ref{main3}, there exists a $T$-periodic probability for the transition probability function $P$ generated by the following SDEs:
\begin{align*}
\left\{
\begin{aligned}
&dX_t= b(t,X_t,\mathcal{L}_{X_t})dt+ \sigma(t,X_t,\mathcal{L}_{X_t})dW_t,~X_s=x,\\
&d\bar X_t= b(t,\bar X_t,\mathcal{L}_{X_t})dt+ \sigma(t,\bar X_t,\mathcal{L}_{X_t})dW_t,~\bar X_s=x,
\end{aligned}
\right.
\end{align*}
Note that we obtain that $X_t=\bar X_t$ for any $t\ge s$,
and the above SDEs' transition probability function $$P(s,(x,\delta_x),t,\cdot)=\mathcal L_{X_t}\times\delta_{\mathcal L_{X_t}}.$$
Let $P_0$ denote the distribution of the $T$-periodic solution,
we have
$$P_0(t+T,A\times\mathcal P(\mathbb R^d))
=
P_0(t,A\times\mathcal P(\mathbb R^d))
$$
for any $A\in\mathcal B(\mathbb R^d)$,
which means that $P_0(\cdot,\cdot\times\mathcal P(\mathbb R^d))$ is a $T$-periodic probability.
We claim that $\pi_t(\cdot):=P_0(t,\cdot\times\mathcal P(\mathbb R^d))$ is a periodic probability for the following McKean-Vlasov SDE:
$$dX_t= b(t,X_t,\mathcal{L}_{X_t})dt+ \sigma(t,X_t,\mathcal{L}_{X_t})dW_t.$$
Indeed,
$\int P(s,(x,\delta_x),t,\cdot)\pi_s(dx)\times\delta_{\pi_s}(d\delta_x)$ is the above MVSDE's distribution at time $t$
with initial distribution $\pi_s\times\delta_{\pi_s}$,
which is obviously a $T$-periodic probability for the above SDEs.
Therefore, $\pi_t$ is a desired $T$-periodic probability for MVSDE,
and there exists a $T$-periodic solution with initial distribution $\pi_s$.
\end{proof}
\begin{rem}
\begin{enumerate}
\item It is worth mentioning that considering distribution-dependent Lyapunov function is natural and reasonable since the coefficients of MVSDEs depend on distribution variable.
The condition (H) is weaker than condition (H3a) or (H3b) in \cite{SW} if the Lyapunov function is independent of distribution.
\end{enumerate}
\end{rem}

As a direct consequence of Theorem \ref{main8}, we have the following corollary,
which gives the existence of stationary solution for time-homogeneous MVSDE.
\begin{cor}
Suppose that the conditions of Theorem \ref{main8} are satisfied if coefficients $b,\sigma$ are independent of $t$.
Then there exists a stationary solution for MVSDE.
\end{cor}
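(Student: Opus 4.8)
The plan is to run the proof of Theorem \ref{main8} essentially verbatim, but with the stationary machinery of Corollary \ref{cor1} taking over the role that Theorem \ref{main2} and Lemma \ref{main3} played in the $T$-periodic case. Since $b,\sigma$ no longer depend on $t$, the coupled MVSDE \eqref{CMVSDE} (with $\bar b,\bar\sigma$ replaced by $b,\sigma$, as in Theorem \ref{main8}) is time-homogeneous, so its transition probability function is time-homogeneous, $P((x,\mu),t,A)$, and the Lyapunov function $V$ in condition (H) may be taken independent of $t$. First I would establish the stationary analogue of Lemma \ref{main3}: applying It\^o's formula to $V(\bar Y^n_t,\mathcal L_{Y^n_t})$ exactly as there, the drift bound in condition (H) yields
\begin{align*}
A_R\,E\int_0^{\tau_n\wedge t}1_{\{|\bar Y^n_r|\vee|\mathcal L_{Y^n_r}|_2>R\}}\,dr
\le
C_1 t+C_2,
\end{align*}
and letting $n\to\infty$ along the regular solution gives
\begin{align*}
\frac 1t\int_0^t P((x,\mu),u,U^c_R)\,du\le\frac{C_3}{A_R}\to0\quad\text{as }R\to\infty.
\end{align*}
Taking $\liminf_{t\to\infty}$ and then $R\to\infty$ produces precisely the time-averaged tightness hypothesis of Corollary \ref{cor1} for some $(x,\mu)$.

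Before invoking Corollary \ref{cor1} I must also verify its two standing hypotheses that were absent in the periodic Theorems \ref{main1}--\ref{main2}: that $P$ is Feller and that it is stochastically continuous. Feller continuity follows from continuous dependence of the coupled MVSDE on its initial data, while stochastic continuity, $P((x,\mu),t,U_\epsilon(x,\mu))\to1$ as $t\to 0$, follows from the path-continuity of the solution together with the moment estimate $E\sup_{r\le t}|X_r-x|^2\to0$. Granting these, Corollary \ref{cor1} yields a stationary probability $P_0$ on $\mathbb R^d\times\mathcal P(\mathbb R^d)$ for the time-homogeneous coupled system.

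Finally I would reproduce the projection step of Theorem \ref{main8}. Using the diagonal coupling with common initial datum gives $X_t=\bar X_t$ and $P((x,\delta_x),t,\cdot)=\mathcal L_{X_t}\times\delta_{\mathcal L_{X_t}}$, so that $\pi(\cdot):=P_0(\cdot\times\mathcal P(\mathbb R^d))$ is time-independent and, by the semigroup identity for $P_0$, is a stationary probability for the MVSDE $dX_t=b(X_t,\mathcal L_{X_t})dt+\sigma(X_t,\mathcal L_{X_t})dW_t$; the stationary solution is the one started from $\pi$. I expect the main obstacle to be the verification of stochastic continuity demanded by Corollary \ref{cor1}, since this hypothesis is genuinely needed here although it was not required in the periodic setting, together with the subtlety already flagged in the Remark following Lemma \ref{main3}: $P_0$ need not be concentrated on pairs of the form $\delta_x\times\mu$, so the projection $\pi$ can only be recognized as a bona fide MVSDE stationary law through the special diagonal coupling.
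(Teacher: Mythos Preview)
The paper gives no proof of this corollary at all: it is stated as ``a direct consequence of Theorem \ref{main8}'' and left at that. Presumably the intended reading is that time-independent coefficients are trivially $T$-periodic for every $T$, so the machinery behind Theorem \ref{main8} (Lemma \ref{main3} via Theorem \ref{main2}) specializes; the paper does not spell out why the resulting $T$-periodic probability is in fact stationary.

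Your route is therefore not the paper's, but it is a legitimate---and arguably cleaner---way to fill in what the paper omits. You bypass Theorem \ref{main2} entirely and invoke Corollary \ref{cor1} directly, which is the purpose-built stationary analogue. The It\^o/Lyapunov estimate you reproduce is exactly the one in Lemma \ref{main3}, and the projection step is verbatim Theorem \ref{main8}; both carry over without change in the time-homogeneous setting. The trade-off you correctly identify is that Corollary \ref{cor1} asks for stochastic continuity of $P$, a hypothesis that the periodic Theorems \ref{main1}--\ref{main2} do not need (there the role is played by condition \eqref{Eq3.3}, which Lemma \ref{main3} verifies via the Chebyshev bound \eqref{Eq3.7}). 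Your sketch for stochastic continuity---path continuity plus a short-time second-moment estimate---is the standard argument and should go through under the standing well-posedness assumptions; the Feller property is likewise assumed implicitly throughout the paper. So your plan is sound, just considerably more explicit than the one-line ``direct consequence'' the paper offers.
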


\subsection{Convergence}
In this subsection, we study the convergence of periodic solutions for MVSDEs under periodic Lyapunov conditions,
which is indeed the convergence of a subsequence.
We first apply Theorem \ref{main4} to obtain the convergence of periodic solutions for the coupled MVSDEs,
then show the convergence of periodic solutions for MVSDEs.
\begin{lem}\label{main5}
Assume that the conditions of Theorem \ref{main3} hold,
and there exists a unique regular solutions of the coupled MVSDEs \eqref{CMVSDE} for any $\xi,x$.
Then there exists a subsequence $\{n_j\}\subset \mathbb N$ such that
\begin{equation}\label{Eq4.1}
\lim_{j\to\infty}\frac 1{n_jT}\int_t^{t+n_jT}\int_{\mathbb R^d\times \mathcal P(\mathbb R^d)}f(s,x,\mu)\nu_s(dx,d\mu)ds
=
\frac 1T\int_0^T\int_{\mathbb R^d\times \mathcal P(\mathbb R^d)} f(s,x,\mu)\mu_s(dx,d\mu)ds
\end{equation}
for any probability measure $\nu_s=\nu\times\delta_{\mu}$,
$T$-periodic function $f\in C_b(\mathbb R^+\times \mathbb R^d\times\mathcal P(\mathbb R^d))$,
where $\mu_s$ is a $T$-periodic probability for the equations \eqref{CMVSDE}.
Particularly, if the periodic is unique,
the convergence in above holds for the whole sequence.
\end{lem}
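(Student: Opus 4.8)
The plan is to recognize Lemma \ref{main5} as a direct application of the convergence result Theorem \ref{main4} to the transition probability function $P$ generated by the coupled MVSDEs \eqref{CMVSDE}. The one thing that must be checked before Theorem \ref{main4} can be invoked is that
$$
\lim_{R\to\infty}\liminf_{n\to\infty}\frac 1n\sum_{k=1}^n P(s,(x,\mu),s+kT,U_R^c)=0
$$
holds for \emph{every} $s,(x,\mu)$, not merely for one distinguished datum. This is precisely the gap between the present hypotheses and those of Lemma \ref{main3}: there a unique regular solution was assumed at a single $(\xi,x)$, whereas here it is assumed for every $(\xi,x)$, and this strengthening is exactly what is needed to run the Lyapunov estimate from an arbitrary initial point.

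First I would note that $P$ is $T$-periodic, because the coefficients $b,\bar b,\sigma,\bar\sigma$ are $T$-periodic, and Feller under the standing assumptions, so that Theorem \ref{main4} is applicable in principle. Next I would repeat the It\^o/Lyapunov computation from the proof of Lemma \ref{main3}, but now with an arbitrary starting point $(x,\mu)$: since a unique regular solution of \eqref{CMVSDE} exists at this datum, we still have $\tau^n\to\infty$, $\mathcal L_{Y^n_r}\to\mathcal L_{X_r}$ and $\mathcal L_{\bar Y^n_r}\to\mathcal L_{\bar X_r}$ as $n\to\infty$, so estimate \eqref{Eq3.6} carries over verbatim and produces limit \eqref{Eq3.2} of Theorem \ref{main2} at this $(s,(x,\mu))$. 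The Chebyshev bound \eqref{Eq3.7} depends only on condition (H) and supplies assumption \eqref{Eq3.3}. Hence Theorem \ref{main2} upgrades \eqref{Eq3.2} to \eqref{Eq3.1} at this point, and as $(s,(x,\mu))$ was arbitrary, the hypothesis of Theorem \ref{main4} is verified everywhere.

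With this in place I would apply Theorem \ref{main4} to $P$, which yields a $T$-periodic probability $\mu_s$ on $\mathbb R^d\times\mathcal P(\mathbb R^d)$ together with a subsequence $\{n_j\}$ for which the ergodic-average identity \eqref{Eq4.1} holds for any family of initial measures, in particular for the product form $\nu_s=\nu\times\delta_\mu$ demanded in the statement; this $\mu_s$ is a $T$-periodic probability for \eqref{CMVSDE}. The final clause, convergence along the full sequence when the $T$-periodic probability is unique, is again inherited from Theorem \ref{main4}, since uniqueness of the limit forces every subsequential limit to coincide. The only genuine obstacle is the second step: one must confirm that the Lyapunov argument is insensitive to the initial datum, i.e. that the constants $C_1,C_2,C_3$ appearing there (which involve $V(s,x,\mu)$ and the uniform bound on $LV$) can be kept locally uniform in $(x,\mu)$, and that regularity of the solution holds at each $(\xi,x)$; both are guaranteed by condition (H) together with the standing regularity hypothesis, after which the passage through Theorems \ref{main2} and \ref{main4} is mechanical.
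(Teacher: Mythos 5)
Your proposal is correct and follows essentially the same route as the paper: the paper's proof simply observes that the hypothesis of Theorem \ref{main4} holds ``by the proof of Lemma \ref{main3}'' (i.e.\ the Lyapunov/It\^o estimate, now runnable at every initial datum because regular solutions exist for all $(\xi,x)$, combined with the Chebyshev bound and Theorem \ref{main2}) and then invokes Theorem \ref{main4}. You have merely spelled out the verification that the paper leaves implicit, which is a faithful expansion of the same argument.
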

\begin{proof}
It is obviously that the conditions of Theorem \ref{main4} holds by the proof of Lemma \ref{main3}.
Thus by Theorem \ref{main4}, we obtain the desired results.
\end{proof}

As a direct corollary, we have the following result.
\begin{thm}
Assume that the conditions of Theorem \ref{main8} hold,
there exists a unique regular solution of the MVSDE for any $x$.
Then the solutions for MVSDEs weakly converge to its periodic solutions,
which is the weak convergence for some subsequence.
Particularly, if the periodic solution is unique,
the convergence in above holds for the whole sequence.
\end{thm}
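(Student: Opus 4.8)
The plan is to deduce this theorem from Lemma \ref{main5} in exactly the same way that Theorem \ref{main8} was deduced from Lemma \ref{main3}: view the MVSDE as the ``diagonal'' special case of the coupled system \eqref{CMVSDE} with $\bar b=b,\ \bar\sigma=\sigma$, and then project onto the first marginal. First I would fix an arbitrary initial law $\nu$ for the MVSDE and identify the corresponding trajectory on the product space $\mathbb R^d\times\mathcal P(\mathbb R^d)$. Taking $\bar X_s=\xi=X_s$ with $\mathcal L_\xi=\nu$ forces $\bar X_t=X_t$ for all $t\ge s$, so the state $(\bar X_t,\mathcal L_{X_t})=(X_t,\mathcal L_{X_t})$ has law $\mathcal L_{X_t}\times\delta_{\mathcal L_{X_t}}$, and the initial law on the product space is exactly $\nu_s=\nu\times\delta_\nu$, which is of the form admitted by Lemma \ref{main5}.

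The key structural step is to check that the first marginal of the product-space distribution flow $\nu_t$ coincides with the law $\mathcal L_{X_t}$ of the MVSDE solution. Writing $P(s,(y,\nu),t,\cdot)=\rho_t^{y,\nu}\times\delta_{m_t^\nu}$, where $m_t^\nu=\mathcal L_{X_t}$ is the law flow of the MVSDE started from $\nu$ and $\rho_t^{y,\nu}$ is the law of $\bar X_t$ started from the point $y$ and driven by $m_\cdot^\nu$, integrating against $\nu_s=\nu\times\delta_\nu$ gives $\nu_t=\big(\int_{\mathbb R^d}\rho_t^{y,\nu}\,\nu(dy)\big)\times\delta_{m_t^\nu}$. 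Because $\bar b=b,\ \bar\sigma=\sigma$ and $\bar X_s\sim\nu=\mathcal L_{X_s}$, the processes $\bar X_t$ and $X_t$ solve the same (now mean-field-frozen) equation with the same initial law, so uniqueness in law yields $\int_{\mathbb R^d}\rho_t^{y,\nu}\,\nu(dy)=m_t^\nu=\mathcal L_{X_t}$; that is, the first marginal of $\nu_t$ is precisely the MVSDE law $\mathcal L_{X_t}$. The same projection applied to the periodic probability $\mu_s$ of \eqref{CMVSDE} produces the MVSDE periodic probability $\pi_s(\cdot)=\mu_s(\cdot\times\mathcal P(\mathbb R^d))$ from Theorem \ref{main8}.

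With these identifications in hand I would apply Lemma \ref{main5} to test functions of the special form $f(s,x,\mu):=g(s,x)$, where $g\in C_b(\mathbb R^+\times\mathbb R^d)$ is $T$-periodic; such $f$ lies in $C_b(\mathbb R^+\times\mathbb R^d\times\mathcal P(\mathbb R^d))$, is $T$-periodic, and is paired with the admissible initial law $\nu_s=\nu\times\delta_\nu$. Since $f$ does not depend on $\mu$, integrating $f$ against $\nu_s$ and against $\mu_s$ only sees their first marginals, so \eqref{Eq4.1} reduces to
\begin{equation*}
\lim_{j\to\infty}\frac 1{n_jT}\int_t^{t+n_jT}\int_{\mathbb R^d}g(s,x)\,\mathcal L_{X_s}(dx)\,ds
=
\frac 1T\int_0^T\int_{\mathbb R^d}g(s,x)\,\pi_s(dx)\,ds,
\end{equation*}
which is exactly the time-averaged weak convergence of the MVSDE solution to its periodic solution along the subsequence $\{n_j\}$. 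The final claim follows from the corresponding ``particularly'' clause of Lemma \ref{main5}: if the periodic probability of \eqref{CMVSDE} (equivalently, the periodic solution of the MVSDE) is unique then $\mu_s$ is unique and the convergence holds along all of $\mathbb N$.

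I expect the main obstacle to be the marginalization step, namely verifying rigorously that the first marginal of $\nu_t$ is the genuine MVSDE law $\mathcal L_{X_t}$ rather than some other push-forward. This rests on the identity $\mathcal L_{\bar X_t}=\mathcal L_{X_t}$, which holds only because the two equations share the same coefficients \emph{and} the initial data are coupled consistently ($\bar X_s\sim\nu=\mathcal L_{X_s}$); keeping track of this consistency — and hence of why the admissible initial law must be $\nu\times\delta_\nu$ rather than an arbitrary product — is the delicate point, but it is the direct dynamical analogue of the computation $P(s,(x,\delta_x),t,\cdot)=\mathcal L_{X_t}\times\delta_{\mathcal L_{X_t}}$ already established in Theorem \ref{main8}.
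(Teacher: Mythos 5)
Your proposal is correct and takes essentially the same route as the paper: specialize the coupled system \eqref{CMVSDE} to $\bar b=b$, $\bar\sigma=\sigma$ with initial law $\nu\times\delta_{\nu}$, identify the product-space distribution flow and the periodic probability as the diagonal embeddings $\mathcal L_{X_t}\times\delta_{\mathcal L_{X_t}}$ and $\mu_s\times\delta_{\mu_s}$, and then invoke Theorem \ref{main8} together with Lemma \ref{main5}, with the uniqueness clause giving convergence along the whole sequence. The only cosmetic difference is that you test against $\mu$-independent functions $g(s,x)$ while the paper keeps general $T$-periodic $f(s,x,\mu)$ and concludes with $f(s,x,\mu_s)$; both yield the claimed weak convergence, and your explicit justification of the marginal identification via uniqueness in law merely spells out what the paper leaves implicit.
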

\begin{proof}
Let $\nu_s=\nu\times\delta_{\nu}, b=\bar b,\sigma=\bar\sigma$,
by Theorem \ref{main8} and Lemma \ref{main5} we obtain that there exist a $T$-periodic probability $\mu_t$ for MVSDE
and a subsequence $\{n_j\}\subset\mathbb N$ such that
\begin{align*}
&\lim_{j\to\infty}\frac 1{n_jT}\int_t^{t+n_jT}\int_{\mathbb R^d}f(s,x,\nu)\nu(dx)ds\\
=&
\lim_{j\to\infty}\frac 1{n_jT}\int_t^{t+n_jT}\int_{\mathbb R^d\times\mathcal P(\mathbb R^d)}f(s,x,\mu)\nu(dx)\times \delta_{\nu}(d\mu)ds\\
=&
\frac 1T\int_0^T\int_{\mathbb R^d\times\mathcal P(\mathbb R^d)} f(s,x,\mu)\mu_s(dx)\times\delta_{\mu_s}(d\mu)ds\\
=&
\frac 1T\int_0^T\int_{\mathbb R^d} f(s,x,\mu_s)\mu_s(dx)ds
\end{align*}
for any $\nu\in\mathcal P(\mathbb R^d)$.

In particular, it is obvious that the convergence in above holds for the whole sequence if the $T$-periodic solution is unique.
\end{proof}

\subsection{Continuous dependence}
In this subsection, we investigate the continuous dependence of periodic solutions for MVSDEs on parameters
by the continuous dependence of periodic Markov processes with state space $\mathbb R^d\times\mathcal P(\mathbb R^d)$.

\begin{thm}\label{main7}
Assume that the conditions of Theorem \ref{main3} hold with $b,\sigma$ replaced by $b_k,\sigma_k$,
and $\lim_{k\to\infty}f_k(t,x,\mu)=f(t,x,\mu)$ with $f=b,\sigma$.
Then $\{\mathcal L_{X_{k,t}}\}$ is tight,
where $X^k_t$ denote the $T$-periodic solutions for the following MVSDEs:
\begin{align*}
dX_{k,t}&= b_k(t, X_{k,t},\mathcal{L}_{X_{k,t}})dt+ \sigma_k(t, X_{k,t},\mathcal{L}_{X_{k,t}})dW_t.
\end{align*}
\end{thm}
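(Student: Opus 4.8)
The plan is to reduce the tightness of the $\mathbb{R}^d$-marginals $\{\mathcal{L}_{X_{k,t}}\}$ to a tail estimate that is uniform in $k$, furnished by the Lyapunov condition, in exactly the manner of the continuous-dependence result Theorem \ref{main6}. First I would recall from the proof of Theorem \ref{main8} that the $T$-periodic solution $X_{k,t}$ of the $k$-th MVSDE arises from the diagonal coupled system (with $b=\bar b=b_k$ and $\sigma=\bar\sigma=\sigma_k$): its law is the $\mathbb{R}^d$-projection $\mathcal{L}_{X_{k,t}}=P_0^k(t,\cdot\times\mathcal{P}(\mathbb{R}^d))$ of the $T$-periodic probability $P_0^k$ of the coupled transition function $P^k$ on $\mathbb{R}^d\times\mathcal{P}(\mathbb{R}^d)$ (whose existence is given by Lemma \ref{main3} applied to $b_k,\sigma_k$). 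Since $\{(x,\mu):|x|>R\}\subseteq U_R^c$, we have $\mathcal{L}_{X_{k,t}}(\{|x|>R\})\le P_0^k(t,U_R^c)$, so it suffices to show $\sup_k\sup_t P_0^k(t,U_R^c)\to 0$ as $R\to\infty$, i.e. that $\{P_0^k\}$ is tight uniformly in $k$ and in time; Prohorov's theorem then delivers the tightness of $\{\mathcal{L}_{X_{k,t}}\}$ on $\mathbb{R}^d$.

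Second, I would produce the uniform tail bound by repeating the It\^o/Chebyshev computation of Lemma \ref{main3}, now run for each pair $b_k,\sigma_k$. Applying It\^o's formula to the Lyapunov function $V$ along the coupled process and using condition (H) gives $EV(t,\bar X^k_t,\mathcal{L}_{X^k_t})-V(s,x,\mu)\le\lambda(t-s)$, whence by Chebyshev's inequality
\[
P^k(s,(x,\mu),t,U_R^c)\le\frac{V(s,x,\mu)+\lambda(t-s)}{V_R(t)},
\]
together with the time-averaged estimate $\tfrac{1}{t-s}\int_0^{t-s}P^k(s,(x,\mu),s+u,U_R^c)\,du\le C_3/A_R$. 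Integrating the first bound against $P_0^k$ and invoking the periodicity $P_0^k(t,\cdot)=P_0^k(t+T,\cdot)$ (as in the proof of Theorem \ref{main6}) forces $P_0^k(t,U_R^c)\to0$ as $R\to\infty$; the decay is governed by $A_R$, $V_R$ and $\lambda$, hence uniform in $t$ and, \emph{provided these quantities may be chosen independently of $k$}, uniform in $k$. This is precisely the assertion ``$P_0^k(s,U_R)>1-\epsilon$ for all $k$'' used inside Theorem \ref{main6}.

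The main obstacle is securing the uniformity in $k$ of the Lyapunov constants $A_R$, $V_R$ and $\lambda$. The cleanest route is to read the hypothesis ``the conditions of Lemma \ref{main3} hold with $b,\sigma$ replaced by $b_k,\sigma_k$'' as asserting a single $T$-periodic Lyapunov function $V$ that works simultaneously for all $k$ with $k$-independent $A_R,V_R$, in which case Step 2 is uniform verbatim. If instead only the pointwise convergence $b_k\to b$, $\sigma_k\to\sigma$ is available, I would transfer the limiting generator's estimate to the $k$-th generator by writing
\[
L^kV=LV+(b_k-b)\cdot\partial_x V+\tfrac12\mathrm{tr}\big((\sigma_k\sigma_k^\top-\sigma\sigma^\top)\,\partial_x^2 V\big)+\cdots,
\]
so that the correction terms are small on compact sets for large $k$ and $\sup_{|x|\vee|\mu|_2>R}L^kV\le -A_R/2$ for all sufficiently large $k$; this yields the uniform bound for all but finitely many $k$, while the remaining finitely many laws are each tight by Theorem \ref{main8}, and a finite union of tight families is tight. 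The delicate point is that pointwise convergence alone does not control the correction terms at infinity, so one genuinely needs locally uniform convergence (or a growth domination of $b_k-b$ and $\sigma_k-\sigma$ by $V$) to push the estimate through, and I would record this as the precise extra assumption needed.
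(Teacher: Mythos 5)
Your proposal is correct in substance and rests on the same mechanism as the paper's proof---the Lyapunov condition (H) plus an It\^o/Chebyshev estimate, made uniform in $k$---but it is routed differently. The paper does not pass back through the coupled system or the periodic measures $P_0^k$ at all: it applies It\^o's formula to $V$ directly along the $k$-th periodic solution, obtains
$EV(t,X_{k,t},\mathcal L_{X_{k,t}})\le EV(s,X_{k,s},\mathcal L_{X_{k,s}})$
(using $L^kV\le -A_R+\lambda$ with the \emph{same} $V$, $A_R$, $\lambda$ for every $k$), and then converts this uniform moment bound into a uniform tail bound via $EV\ge R\,P(V>R)$ together with $V_R\to\infty$ from (H); tightness uniformly in $t$ and $k$ follows immediately. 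Your route---projecting $\mathcal L_{X_{k,t}}=P_0^k(t,\cdot\times\mathcal P(\mathbb R^d))$, re-running the estimates of Lemma \ref{main3} for each $P^k$, and integrating against $P_0^k$ using periodicity, in the style of Theorem \ref{main6}---reaches the same conclusion but is longer, and the integration step quietly requires $\sup_k\int V(s,x,\mu)\,P_0^k(s,(dx,d\mu))<\infty$, a moment bound you do not establish; to be fair, the paper's direct argument has the mirror-image gap (it tacitly uses $\sup_k EV(s,X_{k,s},\mathcal L_{X_{k,s}})<\infty$), so this does not put you below the paper's level of rigor. What your write-up adds, and the paper omits, is the explicit recognition that everything hinges on the Lyapunov data being $k$-independent: the paper silently applies (H) with one $V$ and one pair $A_R,\lambda$ to every generator $L^k$, which is exactly your ``cleanest reading'' of the hypothesis; and your caveat that mere pointwise convergence $b_k\to b$, $\sigma_k\to\sigma$ cannot control the correction terms $(b_k-b)\cdot\partial_xV$ and $\mathrm{tr}((\sigma_k\sigma_k^{\top}-\sigma\sigma^{\top})\partial_x^2V)$ at infinity---so that either a common $V$ or locally uniform convergence with growth domination is genuinely needed---is a correct and worthwhile observation that the paper never makes.
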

\begin{proof}
By (H) and It\^o's formula, there exists a nonnegative constant $\lambda$ with $A_R>\lambda$ as $R$ is sufficiently large such that
$$LV(t,x,\mu)\le -A_R+\lambda,$$
and
\begin{align*}
EV(t,X_{k,t}, \mathcal L_{X_{k,t}})
& ={}
EV(s,X_{k,s}, \mathcal L_{X_{k,s}})+ E\int_s^tLV(r,X_{k,r}, \mathcal L_{X_{k,r}})dr\\
& \le{}
EV(s,X_{k,s}, \mathcal L_{X_{k,0}})- (A_R-\lambda) (t-s),
\end{align*}
where $X_{k}$ denotes the solution of the following MVSDE:
\begin{align}
\left\{
\begin{aligned}\label{pMVSDE1}
dX_{k,t}&= b_k(t,X_{k,t},\mathcal{L}_{X_{k,t}})dt+ \sigma_k(t,X_{k,t},\mathcal{L}_{X_{k,t}})dW_t,\\
X_{k,s}&= \xi.
\end{aligned}
\right.
\end{align}
By Gronwall's inequality, we get
$$EV(t,X_{k,t}, \mathcal L_{X_{k,t}})
\le
EV(s,X_{k,s}, \mathcal L_{X_{k,s}})e^{-(A_R-\lambda)(t-s)}
\le
EV(s,X_{k,s}, \mathcal L_{X_{k,s}})$$
for any $t\ge s$,
where the second inequality holds if $R$ is sufficiency large.
According to (H), there exists a constant $N>0$ such that
\begin{align*}
EV(t, X_{k,t}, \mathcal L_{X_{k,t}})
\ge{}&
P(V(t, X_{k,t},\mathcal L_{X_{k,t}})>R)\cdot R\\
={}&
P(|X_{k,t}|\vee |\mathcal L_{X_{k,t}}|_2>N)\cdot R\\
\ge {}&
P(|X_{k,t}|>N)\cdot R.
\end{align*}
Thus, we get that $\{\mathcal L_{X_{k,t}}\}$ is tight uniformly in $t$,
i.e. there exists a subsequence, still denoted $\{\mathcal L_{X_{k,t}}\}$, weakly converges to some probability $\nu_t$.

Particularly, by Theorem \ref{main3} there exists a $T$-periodic solution $X_{k,t}$ for any $k$.
Therefore, we obtain that $T$-periodic probability family $\{\mathcal L_{X_{k,t}}\}$ is tight.
\end{proof}
\begin{rem}
Denote $X_t$ by the $T$-periodic solution for the following MVSDE:
\begin{align*}
dX_{t}&= b(t, X_{t},\mathcal{L}_{X_{t}})dt+ \sigma(t, X_t,\mathcal{L}_{X_{t}})dW_t.
\end{align*}
If $\nu_t=\mathcal L_{X_t}$, then we obtain that periodic solutions continuously depends on parameter.
However, in order to guarantee the above equality to hold, we require some growth conditions to hold true; for details, see \cite{ML} for instance.
\end{rem}

\section{Applications}
In this section, we give some applications to illustrate our theoretical results.

\begin{exam}
Consider a auxiliary function
$$V(x,\mu):= |x|^2+ \int |y|^2\mu(dy).$$
We have
\begin{align*}
LV(x,\mu)
=&
2\langle x, b(t,x,\mu)\rangle+ |(\sigma\sigma^{\top})(t,x,\mu)|\\
&+
\int 2\langle y, b(t,y,\mu)\rangle+ |(\sigma\sigma^{\top})(t,y,\mu)|\mu(dy).
\end{align*}
Therefore, there exists a stationary ($T$-periodic) solution for MVSDE
\begin{align*}
dX_{t}&= b(t, X_{t},\mathcal{L}_{X_{t}})dt+ \sigma(t, X_t,\mathcal{L}_{X_{t}})dW_t.
\end{align*}
if the coefficients $b,\sigma$ are independent of $t$ ($T$-periodic in $t$),
and satisfy the following condition:
\begin{align*}
LV(x,\mu)
\to
-\infty \quad\hbox{ as} \quad |x|\vee|\mu|_2\to\infty.
\end{align*}
\end{exam}

\begin{exam}
Let
\begin{align*}
b(t,x,\mu)=-4x^3+\frac 18 xsint+ \int z\mu(dz),~
\sigma(t,x,\mu)=\sqrt 2x.
\end{align*}
Then we have
\begin{align*}
&2\langle x, b(t,x,\mu)\rangle+ |(\sigma\sigma^{\top})(t,x,\mu)|
+
\int 2\langle y, b(t,y,\mu)\rangle+ |(\sigma\sigma^{\top})(t,y,\mu)|\mu(dy)\\
= &
2\langle x,-4x^3+\frac 18 xsint+ \int z\mu(dz)\rangle+ 2|x|^2
+
\int 2\langle y,-4y^3+\frac 18 ysint+ \int z\mu(dz)\rangle+ 2|y|^2\mu(dy)\\
\le &
-8|x|^4+3\frac 14|x|^2
-\int 8|y|^4\mu(dy) +\int5\frac 14 |y|^2\mu(dy)\\
\to &
-\infty\quad\hbox{ as} \quad |x|\vee|\mu|_2\to\infty.
\end{align*}
Thus by Example 5.1, we obtain that the MVSDE with coefficients $b,\sigma$ has a $2\pi$-periodic solution.
\end{exam}

\begin{exam}
Let $X_t$ be a time-homogeneous stochastic process described by the MVSDE.
We consider another process,
which is different from $X_t$,
by the presence of a `force' $f(t,x,\mu)$ which is $T$-periodic in $t$:
\begin{align*}
dY_t=b(Y_t,\mathcal L_{Y_t})dt+\sigma(Y_t,\mathcal L_{Y_t})dW_t+f(t,Y_t,\mathcal L_{Y_t})dt.
\end{align*}
Suppose that the unperturbed system ($f\equiv 0$) has a stationary solution,
and there exists a Lyapunov function $V$ satisfying condition (H).
Then the perturbed system has a periodic solution if
$$\partial_xV(x,\mu)\cdot f(t,x,\mu)+\int\partial_{\mu}V(x,\mu)(y)\cdot f(t,y,\mu)\mu(dy)<C$$
for some constant $C$.
We directly prove that the Lyapunov function $V$ satisfies the assumptions of Theorem \ref{main8} for the stochastic process $Y_t$.

Specially, this result degenerates the classical situation for SDEs if the coefficients do not depend on distribution variable; see \cite[Example 3.8]{Khasminskii} for instance.
\end{exam}

\section*{Acknowledgement}

This work is supported by National Funded Postdoctoral Researcher Program (Grant
GZC20230414),
and Fundamental Research Funds for the Central Universities (Grant 135114002).

\end{document}